\newtheorem{theorem}{Theorem}[section]
\newtheorem{lemma}[theorem]{Lemma}
\newtheorem{corollary}[theorem]{Corollary}
\theoremstyle{definition}
\newtheorem{remark}[theorem]{Remark}
\newtheorem*{theorem*}{Theorem}
\providecommand{\abs}[1]{\left\lvert#1\right\rvert}
\numberwithin{equation}{section}
\title{Curve shortening flows on surfaces \\that are not convex at infinity}
\author{Naotoshi Fujihara}
\date{}
\begin{document}

\maketitle

\begin{abstract}
The behavior of the curve shortening flow has been extensively studied. Gage, Hamilton, and Grayson proved that, under the curve shortening flow, an embedded closed curve in the Euclidean plane becomes convex after a finite time and then shrinks to a point while remaining convex. Moreover, Grayson extended these results to surfaces that are convex at infinity and proved results similar to those for plane curves. 
In this paper, we study the curve shortening flow on surfaces that are not convex at infinity. Specifically, we consider a warped product of a unit circle and an open interval with a strictly increasing warping function. In this setting, we can define a graph property for curves within these warped products. It is known that this graph property is preserved along the curve shortening flow. Similarly to the behavior of the curve shortening flow in the plane, we prove that the curve becomes a graph after a finite time under the curve shortening flow.
\end{abstract}


\section{Introduction}\label{sec:intro}
We first introduce the curve shortening flow on the Riemannian surface $(\overline{M}, \overline{g})$. 
Let $F \colon \mathbb{S}^1 \times [0,T) \to \overline{M}$ be a smooth map and $F_t \coloneqq F(\cdot, t)$ be an embedding for each $t \in [0,T)$.
We say that $F$ is the curve shortening flow if $F$ satisfies
\begin{equation}
\frac{\partial F}{\partial t} = - \kappa_t N_t, \label{eq:CSF}
\end{equation}
where $N_t$ is the unit normal vector field of the curve $F_t$ and $\kappa_t$ is the curvature of $F_t$ with respect to $-N_t$. 
The unique existence of the solution to the initial value problem of \eqref{eq:CSF} is known, and the behavior of the curve shortening flow has been studied extensively. 
In \cite{gage1986heat}, Gage and Hamilton studied the curve shortening flow in the Euclidean plane. They proved that the smooth embedded closed convex curve remains convex and shrinks to a point in finite time under the curve shortening flow. 
In addition, Grayson showed in \cite{grayson1987heat} that any smooth embedded closed curve in the plane becomes convex after a finite time. 
Combined with these two statements, it follows that the embedded closed curve becomes convex and shrinks to a point in finite time along the curve shortening flow.
In the higher-dimensional case, Huisken \cite{Huisken} also proved that the mean curvature flow of hypersurfaces in Euclidean space preserves their convexity and that convex hypersurfaces shrink to a point. 
However, Grayson showed that his result for plane curves cannot be extended to the hypersurface case. 
There exist examples of the mean curvature flow whose initial hypersurface is not convex and the hypersurface cannot become convex or shrink to a point under the mean curvature flow. Such an example can be found in \cite{Angenent}. 

The theorem of Gage, Hamilton, and Grayson was generalized to the curve shortening flow in Riemannian surfaces.
This case was studied especially by Gage \cite{gage1990curve} and Grayson \cite{grayson1989shortening}.
Unlike the plane curve case, the curve generally does not become convex; however, it shrinks to a point under certain conditions.
Precisely, the following theorem was established by Grayson:
\begin{theorem}[Grayson \cite{grayson1989shortening}]\label{thm:Grayson}
Let $\overline{M}$ be a smooth Riemannian surface that is convex at infinity, and let $F \colon \mathbb{S}^1 \times [0,T) \to \overline{M}$ be a curve shortening flow with $F_0$ being an embedding. If $T$ is finite, then $F_t(\mathbb{S}^1)$ converges to a point. If $T$ is infinite, then the curvature of $F_t(\mathbb{S}^1)$ converges to zero in the $C^{\infty}$ norm.
\end{theorem}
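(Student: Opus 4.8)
The plan is to combine the standard parabolic machinery for curve shortening with the confinement coming from convexity at infinity, and then to split the analysis according to whether the maximal time $T$ is finite or infinite. First I would record the short-time existence and uniqueness of the flow for a smooth embedded $F_0$, which follows from writing \eqref{eq:CSF} as a quasilinear parabolic PDE for the position in local coordinates (or as a normal graph over $F_0$). The next structural input is that embeddedness is preserved: applying the maximum principle (the avoidance principle) to the distance between pairs of points shows that an initially embedded curve develops no self-intersections for $t \in [0,T)$. The role of the hypothesis that $\overline{M}$ is convex at infinity is to prevent the curve from escaping to infinity: using the convex exhaustion one constructs barrier curves outside a compact set and invokes the avoidance principle again, so that $F_t(\mathbb{S}^1)$ remains in a fixed compact set $K \subset \overline{M}$ for all $t < T$. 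On $K$ the Gaussian curvature of $\overline{g}$ and its derivatives are bounded, which makes the subsequent estimates uniform.

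The second step is the continuation criterion. From \eqref{eq:CSF} one derives the evolution equation for the geodesic curvature,
\begin{equation}
\frac{\partial \kappa_t}{\partial t} = \frac{\partial^2 \kappa_t}{\partial s^2} + \kappa_t^3 + \overline{K}\,\kappa_t, \label{eq:kappa-evol}
\end{equation}
where $s$ is arclength and $\overline{K}$ is the Gaussian curvature of $\overline{M}$ along the curve. Standard parabolic bootstrapping shows that a uniform bound on $\abs{\kappa_t}$ on $[0,t_0]$ yields bounds on all derivatives $\partial_s^k \kappa_t$, so the flow extends smoothly past $t_0$. Hence if $T < \infty$ the curvature must blow up, $\max_{\mathbb{S}^1} \abs{\kappa_t} \to \infty$ as $t \to T$. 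If $T = \infty$, I would instead use that the length $L(t) = \int ds$ is monotone, $\frac{dL}{dt} = -\int \kappa_t^2 \, ds \le 0$. Since $L$ is bounded below by $0$ and the curve stays in $K$, integrating gives $\int_0^\infty \int \kappa_t^2 \, ds \, dt < \infty$; combined with the interior estimates from \eqref{eq:kappa-evol} and the compactness of $K$, this forces $\kappa_t \to 0$ and, by bootstrapping, convergence of the curvature to zero in every $C^k$ norm, which is the $C^\infty$ statement.

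The finite-time case is the heart of the theorem and the main obstacle. By Gauss--Bonnet the enclosed area $A(t)$ satisfies $\frac{dA}{dt} = -\int \kappa_t \, ds = -\bigl(2\pi - \int_{\Omega_t} \overline{K}\, dA\bigr)$, so with $\overline{K}$ bounded on $K$ the area decreases to $0$ in finite time. The difficulty is to show that vanishing area actually means \emph{convergence to a single point}, i.e. to rule out that the curve pinches off into a thin neck or develops a more complicated singularity. This is exactly the step where the analysis is delicate: one needs an isoperimetric / chord--arc estimate showing that the quantity controlling the ``thinness'' of the curve stays bounded, so that the curve cannot form a degenerate neck while retaining positive diameter. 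Concretely, I would rescale the flow near the singular time by the maximal curvature, extract a limit flow using the curvature and distance estimates on $K$, and apply a Huisken-type monotonicity formula to identify the blow-up limit as a homothetically shrinking circle — the only compact embedded self-shrinking limit available in the Euclidean tangent plane — which translates back into $F_t(\mathbb{S}^1)$ converging to a point. Establishing this no-neck-pinch estimate uniformly, against the nonconstant ambient curvature of $\overline{M}$, is where the real work lies; the $T = \infty$ case and the confinement are comparatively routine once convexity at infinity is exploited.
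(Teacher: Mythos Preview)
The paper does not contain a proof of this statement. Theorem~\ref{thm:Grayson} is quoted from Grayson's paper \cite{grayson1989shortening} and used throughout as a black box (most notably in Lemma~\ref{lem:longtime} and Remark~\ref{rem:convex at infinity}); the author never attempts to reprove it. So there is nothing in the paper to compare your proposal against, and for the purposes of this manuscript you should simply cite the result rather than sketch a proof.

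That said, since you went to the trouble: your outline is a reasonable modern roadmap, but a couple of points would need repair if you actually carried it out. In the finite-$T$ case you invoke the enclosed area $A(t)$ and Gauss--Bonnet, but on a general surface the curve need not bound a disk; indeed, part of the content of the theorem is that a non-null-homotopic curve must have $T=\infty$, and that has to be argued before any area computation makes sense. Your blow-up plus monotonicity-formula route to ruling out neck pinches is also not Grayson's original argument (which predates that technology and proceeds by more direct geometric estimates); it is closer to the later approaches of Huisken, Hamilton, and Andrews--Bryan, and making it work on a curved ambient surface rather than $\mathbb{R}^2$ requires some care beyond what you indicate. In the $T=\infty$ case, going from $\int_0^\infty \int \kappa_t^2\,ds\,dt < \infty$ to $\kappa_t \to 0$ in $C^\infty$ is not just bootstrapping: you need a lower bound on $L(t)$ (which follows since the curve cannot shrink to a point when $T=\infty$) together with a differential inequality for $\int \kappa_t^2\,ds$ to rule out oscillation, as in \cite{gage1990curve} and \cite{grayson1989shortening}.
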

A surface is convex at infinity if the convex hull of every compact subset is compact. Since a curve cannot leave a convex set under the curve shortening flow, the curve remains in a compact region. Theorem \ref{thm:Grayson} can be applied to many surfaces, in particular, closed surfaces. 
If we consider the surfaces of revolution generated by strictly increasing functions, the curve will slide off to infinity under the curve shortening flow. Hence, we cannot apply Theorem \ref{thm:Grayson} directly to the curve shortening flow on that surface. 
In this paper, we study the curve shortening flow on such surfaces, i.e., the surfaces that are not convex at infinity. 
First, we introduce the following surfaces that are not convex at infinity.  
Let $(\overline{M}, \overline{g})$ be a warped product defined by
\begin{align*}
\overline{M} \coloneqq \mathbb{S}^1 \times I, \quad \overline{g} \coloneqq r(z)^2 d \theta \otimes d \theta + d z \otimes d z,
\end{align*}
where $(\mathbb{S}^1, d \theta \otimes d \theta)$ is a unit circle, $(I \coloneqq (-\infty,a), dz \otimes dz)$ is an open interval, and $r \colon I \to \mathbb{R}$ is a smooth positive function.
Throughout this paper, we assume the following condition:
\begin{itemize}
\item[(C-0)] The warping function $r$ is strictly increasing, that is, $r'(z) > 0$ for all $z \in I$ and satisfies  
\begin{equation}\label{def:const}
C \coloneqq \sup_{z \in I} \frac{r'(z)}{{r(z)}} < \infty, \quad
D \coloneqq \sup_{z \in I} \abs{\frac{r''(z)}{{r(z)}}} < \infty.
\end{equation}
\end{itemize}
The monotonicity of the warping function $r$ implies that the ambient space $(\overline{M}, \overline{g})$ is not convex at infinity. 
In \cite{F}, the author studied the graphical curve shortening flow on this surface. 
The smooth embedded curve in $(\overline{M}, \overline{g})$ is called a graph if we have $\overline{g}(N, \partial_z) > 0$ on the curve, where $N$ denotes the unit normal vector field of the curve and $\partial_z$ is a coordinate basis of $I$. 
We define $\Theta \coloneqq \overline{g}(N, \partial_z)$ and call this function an angle function. 
Using the angle function $\Theta$, in \cite{F}, we proved that the graph property of the curve is preserved along the curve shortening flow.
In Section \ref{sec:graph}, we recall some results obtained in \cite{F} for the main theorems.
The graph property and the convexity of the plane curve are both preserved under the curve shortening flow. 
In this sense, these two properties are similar. 
In addition to this, as well as the plane curve becoming convex under the curve shortening flow, we can prove that the curve becomes a graph as in the following theorem (see also Figure \ref{fig:thm1}).
\begin{theorem}\label{thm:1}
Let $F_0 \colon \mathbb{S}^1 \to \overline{M}$ be a smooth embedded curve that is not null-homotopic, 
and let $F \colon \mathbb{S}^1\times [0,T) \to \overline{M}$ be the curve shortening flow with the initial curve $F(\cdot, 0) = F_0$. 
Then the curve shortening flow exists for all $t \in [0, \infty)$, i.e., $T = \infty$. 
Furthermore, suppose that one of the following conditions holds:
\begin{enumerate}
\item[(C-1)] $r'(z)/r(z) \to 0 $ as $z \to -\infty$;
\item[(C-2)] the length $L(0)$ of the initial curve satisfies
\begin{equation*}
L(0)^2 < \frac{1}{2 C^2}.
\end{equation*}
\end{enumerate}
Then, the curve becomes a graph at some time $T_0$, that is, $\Theta_{T_0} > 0$ along the curve shortening flow.
\end{theorem}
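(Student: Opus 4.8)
The plan is to treat the statement in two stages: first establish long-time existence $T=\infty$, and then show that under either (C-1) or (C-2) the angle function becomes everywhere positive. Throughout I write $s$ for the arclength parameter, $T$ for the unit tangent, and abbreviate $z_s \coloneqq \overline{g}(T,\partial_z)$; comparing the two orthonormal frames $\{T,N\}$ and $\{\partial_z,\partial_\theta/r\}$ gives the pointwise identity $z_s^2 + \Theta^2 = 1$, which will be used repeatedly. For long-time existence I would first record the evolution of the height $z\circ F$ along the flow. A direct computation gives
\[
\partial_t z = \partial_{ss} z - \frac{r'}{r}\,\Theta^2 ,
\]
a scalar parabolic equation on the curve whose zeroth-order term is non-positive because $r'>0$. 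By the maximum principle $\max_{\mathbb{S}^1} z(\cdot,t)$ is non-increasing, while at a spatial minimum $\partial_{ss}z\ge 0$ together with $r'/r\le C$ and $\Theta^2\le 1$ force $\tfrac{d}{dt}\min_{\mathbb{S}^1} z \ge -C$. Hence on every finite interval the curve stays in a compact region $\{z_1\le z\le z_0\}$ of bounded and non-degenerate ambient geometry. Since $F_0$ is not null-homotopic, the flow stays embedded and non-contractible, so its length is bounded below by $2\pi\inf r>0$ on that region; a finite-time singularity would force the curve to shrink to a point (by the tangent-flow classification of embedded singularities), contradicting non-contractibility. This rules out curvature blow-up on finite intervals and yields $T=\infty$.

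For the graph conclusion I would use the evolution equation for $\Theta$ recalled from \cite{F}, which has the reaction-diffusion form $\partial_t \Theta = \partial_{ss}\Theta + Q\,\Theta$, where $Q$ decomposes as a perfect square $(\kappa + (r'/r)\Theta)^2$ plus a lower-order term proportional to $z_s^2$; the fact that the zeroth-order coefficient carries $\Theta$ as a factor is exactly what makes the graph property preserved. Two structural facts then organize the argument. First, because the curve winds once around $\mathbb{S}^1$, the topological identity
\[
\oint \frac{\Theta}{r}\,ds = 2\pi
\]
holds for all $t$ after fixing the orientation of $N$; in particular the $(ds/r)$-weighted average of $\Theta$ is always strictly positive, so $\Theta$ can never be everywhere non-positive. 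Second, since $\Theta$ solves a linear parabolic equation on the circle, a Sturm-type zero-counting theorem shows that the number of zeros of $\Theta_t$, i.e.\ the number of vertical tangencies of the curve, is finite and non-increasing in $t$. The task thus reduces to ruling out a persistent negative part of $\Theta$, for which I would contradict the long-time behaviour of the flow.

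Under (C-1) I would first show the curve must escape to $z\to-\infty$. From $\tfrac{d}{dt}L=-\oint\kappa^2\,ds$ one gets $\int_0^\infty\!\oint\kappa^2\,ds\,dt\le L(0)<\infty$, so $\oint\kappa^2\,ds\to 0$ along a sequence of times; were the curve to stay in a compact region it would subconverge to a non-contractible closed geodesic, which cannot exist on a surface of revolution with strictly monotone $r$ (Clairaut's relation prevents such a geodesic from closing up). Hence the curve slides off to $z=-\infty$, where (C-1) gives $r'/r\to 0$; then $Q$ and the curvature tend to $0$, the evolution of $\Theta$ degenerates to the pure heat equation on the circle, and $\Theta$ homogenizes toward its positive weighted average, becoming everywhere positive in finite time. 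Under (C-2) no escape is needed: the hypothesis $L(0)^2<1/(2C^2)$ together with $L(t)\le L(0)$ furnishes a Wirtinger--Poincar\'e spectral gap $(2\pi/L)^2$ on the (shrinking) circle that dominates the reaction term, forcing the oscillation of $\Theta$ about its positive mean to decay; combined with $\oint(\Theta/r)\,ds=2\pi$ this again yields $\Theta_{T_0}>0$.

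The routine parts are the height and length computations and the preservation statements inherited from \cite{F}. I expect the main obstacle to be the asymptotic analysis needed to dominate the sign-indefinite coefficient $Q$: in case (C-1) one must quantify the escape to infinity and the resulting flattening (uniform $C^\infty$-smallness of $r'/r$ and of the curvature along the curve) well enough to run the homogenization argument, while in case (C-2) the difficulty is to isolate the precise monotone energy whose decay is governed by the constant $1/(2C^2)$, since a naive reaction bound involves $\kappa^2$, which is not controlled by $C$ alone. Reconciling either estimate with the zero-counting bound, so that the negative part of $\Theta$ is shown to vanish at a \emph{finite} time rather than only asymptotically, is the crux of the proof.
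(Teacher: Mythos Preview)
Your long-time existence argument is essentially equivalent to the paper's: the paper sandwiches the curve between two explicit solutions $z(t)$ of $\dot z=-r'/r$ and then invokes Grayson's theorem on a convex-at-infinity extension of the resulting bounded region, while your height evolution and maximum principle achieve the same compactness, and the exclusion of a point singularity is the same Grayson input.

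For the graph conclusion your route via Sturm zero-counting and heat-equation homogenization is genuinely different from the paper, and the obstacle you flag at the end is a real gap, not a routine detail. The reaction coefficient $Q$ in the $\Theta$-equation contains $\kappa^2$, and nothing in your setup bounds $\kappa$. Under (C-1) you assert that ``the curvature tends to $0$'' once the curve has slid to $z=-\infty$, but this is exactly the hard statement (the integrability $\int_0^\infty\!\int\kappa^2\,ds\,dt<\infty$ only gives decay along a subsequence); under (C-2) you yourself note that the Poincar\'e gap $(2\pi/L)^2$ cannot beat a reaction of size $\kappa^2$. Without quantitative curvature control the homogenization step cannot be made effective, and there is no mechanism forcing $\Theta>0$ at a \emph{finite} time rather than merely asymptotically.

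The paper sidesteps the $\Theta$-PDE entirely at this point. Since the curve is not null-homotopic, $\Theta$ attains the value $1$ at a maximum of $z$; if $\min\Theta\le 0$ persists, the oscillation of $\Theta$ is at least $1$, and the explicit formula $\partial_s\Theta=(\tfrac{r'}{r}\Theta-\kappa)\langle N,E_\theta\rangle$ gives
\[
1\ \le\ \int|\partial_s\Theta|\,ds\ \le\ \int\frac{r'}{r}\,ds+\int|\kappa|\,ds,
\]
hence, after squaring and Cauchy--Schwarz, $1\le 2L^2\max(r'/r)^2+2L\!\int\kappa^2\,ds$. The second term is then shown to vanish by a separate lemma: setting $\psi(t)=L(t)\!\int\kappa^2\,ds$, the \emph{same} inequality, rearranged as $\tfrac{1}{2L^2}-\max\kappa^2\le\max(r'/r)^2\le C^2$, is fed back into a differential inequality for $\psi$ to force $\psi(t)\to 0$. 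The contradiction with (C-1) (using $\max z_t\to-\infty$) or (C-2) is then immediate from the displayed inequality. This explains the constant $1/(2C^2)$: it arises from $(a+b)^2\le 2a^2+2b^2$ applied to the oscillation bound, not from any spectral gap, and the argument never needs pointwise control of $\kappa$.
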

By condition (C-2), we obtain the following statement.
\begin{corollary}
Let $F \colon \mathbb{S}^1\times [0,\infty) \to \overline{M}$ be the curve shortening flow as in Theorem \ref{thm:1}.  
If the length $L(t)$ of the curve $F_t$ approaches $0$ as $t \to \infty$, then the curve becomes a graph after a finite time under the curve shortening flow.
\end{corollary}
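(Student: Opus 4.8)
The plan is to deduce the corollary directly from Theorem~\ref{thm:1} under condition (C-2) by restarting the flow at a sufficiently late time. The crucial point is that the constant $C$ in \eqref{def:const} depends only on the warping function $r$, not on the curve, so the threshold $\tfrac{1}{2C^2}$ appearing in (C-2) is fixed throughout the entire evolution and does not change when we re-initialize the flow.

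First I would use the hypothesis $L(t) \to 0$ as $t \to \infty$ to produce a finite time $t_1 \ge 0$ with $L(t_1)^2 < \tfrac{1}{2C^2}$. By the definition of the limit, applied to the fixed positive number $\tfrac{1}{2C^2}$, there is some $t_1$ beyond which $L(t)^2$ stays below this threshold; notice that this step does not require the monotonicity of the length under the flow, only its convergence to zero. Next I would restart the flow at $t_1$, setting $\tilde{F}(\cdot, s) \coloneqq F(\cdot, t_1 + s)$ for $s \in [0,\infty)$. Since equation \eqref{eq:CSF} does not depend explicitly on $t$, the map $\tilde{F}$ is again a curve shortening flow, and its initial curve $\tilde{F}_0 = F_{t_1}$ is embedded and not null-homotopic, because both embeddedness and the free homotopy class are preserved along the flow. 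Moreover, its initial length $L(t_1)$ satisfies condition (C-2).

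Applying Theorem~\ref{thm:1} to $\tilde{F}$ then yields a time $s_0$ at which $\Theta > 0$ along $\tilde{F}_{s_0}$, so the original curve $F_{t_1 + s_0}$ is a graph; as both $t_1$ and $s_0$ are finite, the curve becomes a graph at the finite time $T_0 \coloneqq t_1 + s_0$. There is no substantial obstacle here: the only point needing verification is that the restarted flow genuinely satisfies the hypotheses of Theorem~\ref{thm:1}, namely that $F_{t_1}$ remains an embedded, non-null-homotopic curve, and these are precisely the structural properties guaranteed by the construction of the flow. Thus the corollary follows at once.
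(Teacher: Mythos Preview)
Your proposal is correct and matches the paper's approach exactly: the paper simply remarks ``By condition (C-2), we obtain the following statement'' before stating the corollary, and your argument---choosing $t_1$ with $L(t_1)^2 < 1/(2C^2)$ and re-initializing the flow so that (C-2) applies---is precisely the intended derivation. No separate proof appears in the paper beyond this remark, so there is nothing further to compare.
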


The proof of Theorem \ref{thm:1} will be provided in Section \ref{sec:proof} by using Grayson's theorem for the surface case. 
By Theorem 1.1 from \cite{F} (also see Theorem \ref{thm:warped} in Section \ref{sec:graph}), we obtain the following theorem.
\begin{theorem}\label{thm:2}
Let $F_0 \colon \mathbb{S}^1 \to \overline{M}$ be a smooth embedded curve that is not null-homotopic, 
and let $F \colon \mathbb{S}^1\times [0,T) \to \overline{M}$ be the curve shortening flow with the initial curve $F(\cdot, 0) = F_0$. 
Suppose that the warping function $r$ satisfies the following conditions:
\begin{enumerate}
\item[(C-3)] $r(z) r''(z) - 2 {r'(z)}^2 \geq 0$ on $I$;
\item[(C-4)] $\sup_{z \in I}\abs{ r^{(i)}(z) / r(z) } < \infty$ for all $i \geq 1$.
\end{enumerate}
Then, the curve shortening flow exists for all $t \in [0, \infty)$, i.e., $T = \infty$, and it holds that
\begin{equation*}
\max_{p \in \mathbb{S}^1} z_t(p) \to -\infty \quad (t \to \infty),
\end{equation*}
where $z_t$ is the $z$-coordinate of the curve $F_t$ (defined by \eqref{def:z} in Section \ref{sec:graph}).
We also have 
\begin{equation*}
    \partial_{s}^{(m)} \kappa_t \to 0 \quad (t \to \infty)
\end{equation*}
for all $m \geq 0$, where $\partial_s$ is a derivative with respect to the arc length $s$.
\end{theorem}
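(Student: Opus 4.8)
The plan is to deduce Theorem~\ref{thm:2} from the graphical theory of \cite{F} (recorded as Theorem~\ref{thm:warped}) by first forcing the curve to become a graph in finite time and then restarting the flow. The key reduction is that condition (C-3) already implies condition (C-1), so Theorem~\ref{thm:1} becomes available. To see this, set $u \coloneqq r'/r > 0$; then $u' = r''/r - (r'/r)^2$, and (C-3), which after dividing by $r^2$ reads $r''/r \ge 2(r'/r)^2$, gives $u' \ge u^2 > 0$. Hence $u$ is strictly increasing and $(-1/u)' = u'/u^2 \ge 1$. Integrating this from a variable point $z$ up to a fixed $z_0 \in I$ yields $1/u(z) \ge 1/u(z_0) + (z_0 - z)$, so $u(z) \to 0$ as $z \to -\infty$; this is precisely (C-1).

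By Theorem~\ref{thm:1} the flow exists for all $t \in [0,\infty)$ irrespective of the extra conditions, which gives the long-time existence $T = \infty$ asserted in Theorem~\ref{thm:2}; and since (C-1) now holds, the same theorem furnishes a finite time $T_0$ at which the curve becomes a graph, $\Theta_{T_0} > 0$. I would then restart the flow at $T_0$, setting $\tilde F(\cdot,t) \coloneqq F(\cdot, t+T_0)$. Because the graph property is preserved under the curve shortening flow (the preservation result from \cite{F} recalled in Section~\ref{sec:graph}), $\tilde F$ is a graphical curve shortening flow with graphical initial datum $F_{T_0}$, and the warping function still satisfies (C-3) and (C-4). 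Thus $\tilde F$ meets the hypotheses of Theorem~\ref{thm:warped}, whose conclusions—$\max_{p \in \mathbb{S}^1} z_t(p) \to -\infty$ and $\partial_s^{(m)} \kappa_t \to 0$ for every $m \ge 0$—are exactly the remaining assertions of Theorem~\ref{thm:2} (the shift by $T_0$ does not affect the limits as $t \to \infty$).

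I expect the substantive content to lie entirely in the reduction, since once the curve is a graph the decay and convergence statements are imported wholesale from \cite{F}. The main point to get right is the elementary but sign-sensitive ODE argument that (C-3) implies (C-1), together with the observation that the embeddedness and non-null-homotopy hypotheses needed for Theorem~\ref{thm:1} hold at $t = 0$ and are propagated by the flow, so that $\tilde F$ is genuinely admissible for Theorem~\ref{thm:warped}. Beyond that, one should note only that condition (C-4) is what underlies the smooth convergence $\partial_s^{(m)} \kappa_t \to 0$: it guarantees that all the ambient quantities $r^{(i)}/r$ are bounded, so the ambient metric has bounded geometry and the parabolic derivative estimates of \cite{F} can be run to arbitrary order. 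No new estimate is required here; the remaining work is the bookkeeping needed to verify that the flow restarted at $T_0$ falls legitimately under Theorem~\ref{thm:warped}.
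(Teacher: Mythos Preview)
Your approach is correct and coincides with the paper's: use Theorem~\ref{thm:1} to make the curve a graph in finite time, then invoke Theorem~\ref{thm:warped} on the restarted flow, with Lemma~\ref{lem:longtime} supplying long-time existence and $\max z_t \to -\infty$. Your ODE computation that (C-3) forces (C-1) is a detail the paper leaves implicit in its one-line deduction, so you have in fact made the argument more complete; the only correction is that $\max_{p}z_t(p)\to -\infty$ is not among the stated conclusions of Theorem~\ref{thm:warped} as recorded here---it comes from Lemma~\ref{lem:longtime}, which you should cite for that assertion.
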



\begin{figure}
\centering
\begin{tikzpicture}[domain=-11:-1,samples=100,>=stealth]
\draw[->,dashed] (-11.5,0) -- (-0.5,0) node[right] {$z$};

\draw plot (\x, {0.2 -1/\x});
\draw plot (\x, {-0.2 +1/\x});

\draw (-1.2,{0.2 + 1/1.2}) to [out=205,in=155] (-1.2,{-0.2 - 1/1.2});
\draw(-10.5,{0.2  +1/10.5}) to [out=180,in=180] (-10.5,{-0.2 - 1/10.5});

\draw[dotted] (-1.2,{0.2 + 1/1.2}) to [out=-40,in=40] (-1.2,{-0.2 - 1/1.2});
\draw[dotted] (-10.5,{0.2 + 1/10.5}) to [out=0,in=0] (-10.5,{-0.2 - 1/10.5});

\draw[thick] (-3,{0.2 + 1/3}) to [out=180,in=180] (-2.5,-0.4);
\draw[thick] (-2.5,-0.4) to [out=0,in=180] (-2.3,0.2);
\draw[thick] (-2.3,0.2) to [out=0,in=0] (-2.1,{-0.2 - 1/2.1});
\draw[dotted,thick] (-3,{0.2 + 1/3}) to [out=325,in=180] (-2.1,{-0.2 - 1/2.1});

\draw[->,dashed] (-3,-1.3) to (-5,-2);

\end{tikzpicture}  
\begin{tikzpicture}[domain=-11:-1,samples=100,>=stealth]
\draw[->,dashed] (-11.5,0) -- (-0.5,0) node[right] {$z$};

\draw plot (\x, {0.2 -1/\x});
\draw plot (\x, {-0.2 +1/\x});

\draw (-1.2,{0.2 + 1/1.2}) to [out=205,in=155] (-1.2,{-0.2 - 1/1.2});
\draw(-10.5,{0.2  +1/10.5}) to [out=180,in=180] (-10.5,{-0.2 - 1/10.5});

\draw[dotted] (-1.2,{0.2 + 1/1.2}) to [out=-40,in=40] (-1.2,{-0.2 - 1/1.2});
\draw[dotted] (-10.5,{0.2 + 1/10.5}) to [out=0,in=0] (-10.5,{-0.2 - 1/10.5});

\draw[thick] (-6.5,{0.2 + 1/6.2)}) to [out=180,in=190] (-6,-0.05);
\draw[thick] (-6, -0.05) to [out=10,in=0] (-5.7,{-0.2 - 1/5.7});
\draw[dotted,thick] (-6.5,{0.2 + 1/6.5}) to [out=325,in=180] (-5.7,{-0.2 - 1/5.7});

\draw[->,dashed] (-7,-1.3) to (-9,-2);
\end{tikzpicture}  
\begin{tikzpicture}[domain=-11:-1,samples=100,>=stealth]
\draw[->,dashed] (-11.5,0) -- (-0.5,0) node[right] {$z$};

\draw plot (\x, {0.2 -1/\x});
\draw plot (\x, {-0.2 +1/\x});

\draw (-1.2,{0.2 + 1/1.2}) to [out=205,in=155] (-1.2,{-0.2 - 1/1.2});
\draw(-10.5,{0.2  +1/10.5}) to [out=180,in=180] (-10.5,{-0.2 - 1/10.5});

\draw[dotted] (-1.2,{0.2 + 1/1.2}) to [out=-40,in=40] (-1.2,{-0.2 - 1/1.2});
\draw[dotted] (-10.5,{0.2 + 1/10.5}) to [out=0,in=0] (-10.5,{-0.2 - 1/10.5});

\draw[thick] (-9.5,{0.2 + 1/9.5}) to [out=180,in=0] (-9,{-0.2 - 1/9});
\draw[dotted,thick] (-9.5,{0.2 + 1/9.5}) to [out=0,in=180] (-9,{-0.2- 1/9});

\end{tikzpicture}  
\caption{Theorem \ref{thm:1}}
\label{fig:thm1}
\end{figure}

This paper is organized as follows. 
In Section \ref{sec:graph}, we explain the results from \cite{F} to introduce the concepts necessary for the main theorems. Then, we prove Theorem \ref{thm:1} and Theorem \ref{thm:2} in Section \ref{sec:proof}.

\section{Motion of graphical curves}\label{sec:graph}
In this section, we provide a brief review of the results from the author's previous paper \cite{F}.
We explain the properties of the curve shortening flow in the warped product $(\overline{M}, \overline{g})$ defined in the previous section. 
First, we compute the covariant derivatives. 
Let $\overline{\nabla}$ denote the Levi-Civita connection with respect to the Riemannian metric $\overline{g}=r(z)^2 d \theta \otimes d \theta + d z \otimes d z$. 
Define a local orthonormal frame field $\{ E_{\theta}, \partial_z \}$ by
\begin{equation*}
    E_{\theta} \coloneqq \frac{1}{r(z)} \partial_{\theta},
\end{equation*}
and $\partial_z$ is a coordinate basis of the open interval $I$.
Under this setting, we have the following formulas. 
\begin{lemma}
\label{lem:nabla bar 1}
The covariant derivatives are given by
\begin{align*}
&\overline{\nabla}_{E_{\theta}} E_{\theta} = - \frac{r'}{r}  \partial_z, \quad \overline{\nabla}_{\partial_z} E_{\theta} = 0, \\
&\overline{\nabla}_{E_{\theta}} \partial_z = \frac{r'}{r} \, E_{\theta}, \quad \overline{\nabla}_{\partial_z} \partial_z = 0. 
\end{align*}
Additionally, the Gauss curvature $\overline{K}$ is given by
\begin{equation*}
\overline{K} = -\frac{r''}{r},
\end{equation*}
hence, the Gauss curvature depends only on $z$.
\end{lemma}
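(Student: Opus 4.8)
The plan is to obtain the four covariant derivatives directly from the two defining properties of the Levi--Civita connection---metric compatibility and vanishing torsion---working entirely in the orthonormal frame $\{E_{\theta}, \partial_z\}$, and then to read off the Gauss curvature from the resulting connection coefficients.

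First I would record the single nontrivial Lie bracket. Since $E_{\theta} = r^{-1}\partial_{\theta}$ and $r$ depends only on $z$, a direct computation on a test function gives $[E_{\theta}, \partial_z] = (r'/r)E_{\theta}$, while $[E_{\theta}, E_{\theta}] = [\partial_z,\partial_z] = 0$. Next, metric compatibility applied to the constant-norm frame fields immediately kills the ``diagonal'' coefficients: from $\overline{g}(E_{\theta}, E_{\theta}) \equiv 1$ we get $\overline{g}(\overline{\nabla}_X E_{\theta}, E_{\theta}) = 0$ for every $X$, and likewise $\overline{g}(\overline{\nabla}_X \partial_z, \partial_z) = 0$. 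Hence each $\overline{\nabla}_X E_{\theta}$ is a multiple of $\partial_z$ and each $\overline{\nabla}_X\partial_z$ is a multiple of $E_{\theta}$. Writing these unknown multiples and imposing torsion-freeness, $\overline{\nabla}_X Y - \overline{\nabla}_Y X = [X,Y]$, on the pair $(E_{\theta}, \partial_z)$ together with the remaining compatibility relation $\overline{g}(\overline{\nabla}_X E_{\theta}, \partial_z) + \overline{g}(E_{\theta}, \overline{\nabla}_X \partial_z) = 0$ determines all four coefficients and yields the stated formulas. Equivalently, one may compute the coordinate Christoffel symbols $\Gamma^{\theta}_{\theta z} = r'/r$, $\Gamma^z_{\theta\theta} = -rr'$, all others zero, and convert via $\partial_{\theta} = rE_{\theta}$, noting that $E_{\theta}(r^{-1}) = 0$.

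Finally, for the Gauss curvature I would feed the four connection formulas into
\[
R(E_{\theta},\partial_z)\partial_z = \overline{\nabla}_{E_{\theta}}\overline{\nabla}_{\partial_z}\partial_z - \overline{\nabla}_{\partial_z}\overline{\nabla}_{E_{\theta}}\partial_z - \overline{\nabla}_{[E_{\theta},\partial_z]}\partial_z.
\]
Using $\overline{\nabla}_{\partial_z}\partial_z = 0$, $\overline{\nabla}_{\partial_z}E_{\theta} = 0$, $\overline{\nabla}_{E_{\theta}}\partial_z = (r'/r)E_{\theta}$, and the bracket above, the three terms collapse to a multiple of $E_{\theta}$; pairing with $E_{\theta}$ then gives $\overline{K} = \overline{g}(R(E_{\theta},\partial_z)\partial_z, E_{\theta}) = -r''/r$, where the $(r'/r)^2$ contributions from the middle term and the bracket term cancel exactly.

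The computation is routine throughout; the only place demanding care is bookkeeping---remembering that $E_{\theta}$ annihilates functions of $z$ alone, so that the $\theta$-direction contributes no derivative of $r$, and fixing the curvature sign convention so that the final $\overline{K}$ carries the correct sign. Since $r$ is a function of $z$ only, so is $-r''/r$, confirming that $\overline{K}$ depends only on $z$.
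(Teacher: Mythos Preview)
Your argument is correct: the Lie bracket $[E_{\theta},\partial_z]=(r'/r)E_{\theta}$ together with metric compatibility and torsion-freeness uniquely pins down the four connection coefficients as you describe, and your curvature computation is accurate (the $(r'/r)^2$ pieces from $-\overline{\nabla}_{\partial_z}\bigl((r'/r)E_{\theta}\bigr)$ and $-\overline{\nabla}_{(r'/r)E_{\theta}}\partial_z$ do indeed cancel, leaving $-r''/r$). The paper itself does not supply a proof of this lemma at all---it simply refers the reader to O'Neill's textbook on semi-Riemannian geometry---so your direct frame computation is more than what the paper provides, and is entirely in line with the standard warped-product formulas one would find in that reference.
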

The proof can be found in \cite{O'Neill}, for example.
Next, we define some functions and calculate their evolution equations.
Let $F \colon \mathbb{S}^1 \times [0,T) \to \overline{M}$ be a curve shortening flow and set $F_t \coloneqq F(\cdot, t) \colon \mathbb{S}^1 \to \overline{M}$.
The connection induced by $F$ is denoted by $\overline{\nabla}^F$.
We then define an angle function on $M$ as already mentioned in Section \ref{sec:intro}, by
\begin{equation*}
   \Theta_t \coloneqq \langle N_t, \partial_z \rangle,
\end{equation*}
and we define a height function $z_t \colon M \to I$ by
\begin{equation}\label{def:z}
z_t \coloneqq \pi_z \circ F_t,
\end{equation}
where $\pi_z$ is the projection from $\overline{M}$ to $I$.

\begin{lemma}
\label{lem:theta}
The evolution equations for $\Theta$ and $v \coloneqq \Theta^{-1}$ are as follows:
\begin{align*}
\left(\partial_t - \Delta\right) \Theta
&= \frac{r r'' - 2 {r'}^2}{r^2} \Theta (1 - \Theta^2) + \left( \frac{r'}{r} \Theta - \kappa \right)^2 \Theta, \\
\left(\partial_t - \Delta\right) v
&= -\frac{2}{v} \abs{\partial_s v}^2 -\frac{r r'' - 2 {r'}^2}{r^2}  \left( v - \frac{1}{v} \right) - \left( \frac{r'}{r} \Theta - \kappa \right)^2 v.
\end{align*}
\end{lemma}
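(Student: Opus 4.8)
The plan is to reduce every derivative to the arc-length operator $\partial_s$ and the curvature $\kappa$, exploiting the fact that $\partial_z$ is a \emph{concircular} (closed conformal) vector field. From Lemma~\ref{lem:nabla bar 1} one checks directly on the frame $\{E_\theta,\partial_z\}$ that $\overline{\nabla}_X \partial_z = \frac{r'}{r}\bigl(X - \langle X,\partial_z\rangle\,\partial_z\bigr)$ for every $X$ (the two cases $X=E_\theta$ and $X=\partial_z$ recover the two relations for $\overline{\nabla}\,\partial_z$ in the lemma, and the general case follows by linearity). Since $\langle\partial_z,\partial_z\rangle=1$, I decompose $\partial_z = \sigma T + \Theta N$ along the curve, where $T$ is the unit tangent, $\sigma \coloneqq \langle T,\partial_z\rangle$, and $\sigma^2+\Theta^2=1$. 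It is convenient to abbreviate $\psi \coloneqq \kappa - \frac{r'}{r}\Theta$, so that the quantity appearing in the statement is $\psi^2 = (\tfrac{r'}{r}\Theta-\kappa)^2$. I also record that $\partial_z$ is the metric gradient of the height function, whence $\partial_s z = \langle T,\partial_z\rangle = \sigma$.

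First I would compute the two spatial derivatives. Fixing the sign conventions $\overline{\nabla}_s T = -\kappa N$ and $\overline{\nabla}_s N = \kappa T$ (consistent with $\partial_t F = -\kappa N = \overline{\nabla}_T T$), differentiating $\Theta=\langle N,\partial_z\rangle$ and $\sigma=\langle T,\partial_z\rangle$ along $s$ and inserting the concircular formula yields the clean pair
\begin{align*}
\partial_s\Theta = \sigma\psi, \qquad \partial_s\sigma = -\Theta\psi .
\end{align*}
Differentiating once more, using $\partial_s z=\sigma$ and $\bigl(\tfrac{r'}{r}\bigr)' = \tfrac{r''r-{r'}^2}{r^2}$, and recalling that $\Delta=\partial_s^2$ because the induced metric on the curve is flat in arc length, I obtain
\begin{align*}
\Delta\Theta = \partial_s^2\Theta = -\Theta\psi^2 + \sigma\,\partial_s\kappa - \frac{r''r-{r'}^2}{r^2}\,\sigma^2\Theta - \frac{r'}{r}\,\sigma^2\psi .
\end{align*}

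Next I would compute the time derivative. The standard first-variation identity for $\partial_t F=-\kappa N$ gives $\partial_t(ds)=-\kappa^2\,ds$, hence $[\partial_t,\partial_s]=\kappa^2\partial_s$, from which $\overline{\nabla}_t T=-(\partial_s\kappa)N$ and $\overline{\nabla}_t N=(\partial_s\kappa)T$. Since $\overline{\nabla}_t\partial_z=\overline{\nabla}_{-\kappa N}\partial_z=-\kappa\tfrac{r'}{r}(N-\Theta\partial_z)$ by the concircular formula, I get $\partial_t\Theta = \sigma\,\partial_s\kappa - \kappa\tfrac{r'}{r}(1-\Theta^2)$. Subtracting the expression for $\Delta\Theta$, the $\sigma\,\partial_s\kappa$ terms cancel, and after substituting $\sigma^2=1-\Theta^2$ the remaining $\tfrac{r'}{r}$-terms collapse via $-\tfrac{r'}{r}\kappa(1-\Theta^2)+\tfrac{r'}{r}(1-\Theta^2)\psi = -\bigl(\tfrac{r'}{r}\bigr)^2\Theta(1-\Theta^2)$; recognizing the leftover coefficient as $\tfrac{r''r-{r'}^2}{r^2}-\bigl(\tfrac{r'}{r}\bigr)^2=\tfrac{rr''-2{r'}^2}{r^2}$ gives exactly the claimed equation for $\Theta$.

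Finally, the equation for $v=\Theta^{-1}$ follows purely algebraically: from $\partial_s v=-\Theta^{-2}\partial_s\Theta$ one has $(\partial_t-\Delta)v = -\Theta^{-2}(\partial_t-\Delta)\Theta - 2\Theta^{-3}\abs{\partial_s\Theta}^2$, and rewriting $\abs{\partial_s\Theta}^2=\Theta^4\abs{\partial_s v}^2$ turns the last term into $-\tfrac{2}{v}\abs{\partial_s v}^2$, while $-\Theta^{-2}(\partial_t-\Delta)\Theta$ becomes the stated combination after using $\Theta^{-1}=v$ and $\Theta^{-1}(1-\Theta^2)=v-\tfrac1v$. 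I expect the main difficulty to be bookkeeping rather than conceptual: one must pin down the orientation and sign conventions for $\kappa$ and $N$, and—crucially—see that the coefficient $\tfrac{rr''-2{r'}^2}{r^2}$ is generated as $\bigl(\tfrac{r'}{r}\bigr)'-\bigl(\tfrac{r'}{r}\bigr)^2$, i.e.\ as the ambient curvature contribution $r''/r$ minus twice the square of the connection coefficient of the non-parallel field $\partial_z$. Everything else is a careful but routine application of $\partial_s$ and $\partial_t$.
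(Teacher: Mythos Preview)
Your argument is correct and is essentially the same computation the paper carries out: both proofs compute $\partial_s\Theta$, $\partial_s^2\Theta=\Delta\Theta$, and $\partial_t\Theta$ separately and then subtract, with your $\sigma=\langle T,\partial_z\rangle$ playing the role of the paper's $-\langle N,E_\theta\rangle$ and your $\psi$ being $-(\tfrac{r'}{r}\Theta-\kappa)$. The only cosmetic difference is that you package the two connection formulas $\overline{\nabla}_{E_\theta}\partial_z=\tfrac{r'}{r}E_\theta$ and $\overline{\nabla}_{\partial_z}\partial_z=0$ into the single concircular identity $\overline{\nabla}_X\partial_z=\tfrac{r'}{r}(X-\langle X,\partial_z\rangle\partial_z)$, which slightly streamlines the bookkeeping but does not change the structure of the proof.
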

\begin{proof}
For the arc length parameter $s$, we have $\partial_s = \partial_x / \abs{\partial_x}$, and set $\mathfrak{t} \coloneqq d F(\partial_s)$, where $x$ is a coordinate of $I$. 
Using the frame $\{ E_{\theta}, \partial_z \}$, we can express $\mathfrak{t}$ and $N$ as follows:
\begin{align*}
\mathfrak{t} &= \langle N, \partial_z \rangle E_{\theta} - \langle N, E_{\theta} \rangle \partial_z, \\
N &= \langle N, E_{\theta} \rangle E_{\theta} + \langle N, \partial_z \rangle \partial_z.
\end{align*}
We then have
\begin{align}
\partial_s \Theta 
&= \langle \overline{\nabla}^{F}_{\partial_s} N, \partial_z \rangle + \langle  N, \overline{\nabla}^{F}_{\partial_s} \partial_z \rangle \notag \\ 
&= \langle \kappa \mathfrak{t}, \partial_z \rangle + \left\langle N, \frac{r'}{r} \Theta E_{\theta} \right\rangle \notag \\
&= \left( \frac{r'}{r} \Theta - \kappa \right) \langle N, E_{\theta} \rangle, \label{dsTheta}
\end{align}
and it also holds that
\begin{equation*}
\partial_s \langle N, E_{\theta} \rangle = - \left( \frac{r'}{r} \Theta - \kappa \right) \Theta.
\end{equation*}
Hence, we obtain 
\begin{align*}
\Delta \Theta 
&= \partial_s \partial_s \Theta \\
&= \partial_s \left\{ \left( \frac{r'}{r} \Theta - \kappa \right) \langle N, E_{\theta} \rangle \right\} \\
&= \partial_s \left( \frac{r'}{r} \Theta - \kappa \right)  \langle N, E_{\theta} \rangle + \left( \frac{r'}{r} \Theta - \kappa \right) \partial_s \langle N, E_{\theta} \rangle \\
&= - \left( \frac{r'}{r} \right)' \Theta (1- \Theta^2) + \frac{r'}{r} \left( \frac{r'}{r} \Theta - \kappa \right)(1 - \Theta^2)
- \partial_s \kappa \langle N, E_{\theta} \rangle - \left( \frac{r'}{r} \Theta - \kappa \right)^2 \Theta,
\end{align*}
where we use $\partial_s z = d \pi_z (\mathfrak{t}) = - \langle N, E_{\theta} \rangle$ and $\langle N, E_{\theta} \rangle^2 = 1 - \Theta^2$.
The time derivative of $\Theta$ is given by
\begin{align*}
\partial_t \Theta 
&=  \langle \overline{\nabla}^{F}_{\partial_t} N, \partial_z \rangle + \langle N, \overline{\nabla}^{F}_{\partial_t} \partial_z \rangle \\
&=  \langle d F (\mathrm{grad}\kappa), \partial_z \rangle + \langle N, \overline{\nabla}_{- \kappa N} \partial_z \rangle \\
&= \langle (\partial_s \kappa) \mathfrak{t}, \partial_z \rangle - \kappa \langle N, E_{\theta} \rangle \langle N, \overline{\nabla}_{E_{\theta}} \partial_z \rangle \\
&= \partial_s \kappa \langle \mathfrak{t}, \partial_z \rangle - \kappa \langle N, E_{\theta} \rangle \left\langle N, \frac{r'}{r} E_{\theta} \right\rangle \\
&= - \partial_s \kappa \langle N, E_{\theta} \rangle  - \kappa \frac{r'}{r} (1 - \Theta^2).
\end{align*}
Thus, it follows that 
\begin{align*}
(\partial_t - \Delta) \Theta
&= - \partial_s \kappa \langle N, E_{\theta} \rangle - \kappa \frac{r'}{r} (1 - \Theta^2) \\
&+ \partial_s \kappa \langle N, E_{\theta} \rangle + \frac{r r'' - {r'}^2}{r^2} \Theta (1 - \Theta^2) 
 - \frac{r'}{r} \left( \frac{r'}{r} \Theta - \kappa \right)(1-\Theta^2) + \left( \frac{r'}{r} \Theta - \kappa \right)^2 \Theta \\
& = \frac{r r'' - 2 {r'}^2}{r^2} \Theta (1 - \Theta^2) +  \left( \frac{r'}{r} \Theta - \kappa \right)^2 \Theta.
\end{align*}
The evolution equation for $v = \Theta^{-1}$ follows from the evolution equation for $\Theta$.
\end{proof}
By applying the maximal principle to the evolution equation for the function $v$, we obtain the graph-preserving property of the curve shortening flow.
\begin{lemma}
\label{thm:graph}
Let $F \colon \mathbb{S}^1 \times [0,T) \to \overline{M}$ be a curve shortening flow with $F_0$ being a graph. Then, for the warping function $r$ satisfying (C-0), the curve $F_t$ remains a graph for all $t \in [0,T)$.
Furthermore, if we assume condition (C-3), then $v_t$ is uniformly bounded.
\end{lemma}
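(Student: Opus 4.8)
The plan is to apply the parabolic maximum principle to the evolution equations of Lemma~\ref{lem:theta}, exploiting the sign structure of their zeroth-order terms. Since the spatial domain $\mathbb{S}^1$ is compact, the extrema of $\Theta_t$ and $v_t$ are attained, so Hamilton's trick applies: the time derivative of $\min_p \Theta(p,t)$ (respectively $\max_p v(p,t)$) is controlled by the evolution equation evaluated at a point where the extremum occurs, where the first spatial derivative vanishes and the Laplacian term has a favorable sign.

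For the graph-preservation statement I would work directly with $\Theta$, so as to avoid the singularity of $v = \Theta^{-1}$ at $\Theta = 0$. Rewrite the first equation of Lemma~\ref{lem:theta} as $(\partial_t - \Delta)\Theta = g\,\Theta$, where
\[
g \coloneqq \frac{r r'' - 2{r'}^2}{r^2}\left(1 - \Theta^2\right) + \left(\frac{r'}{r}\Theta - \kappa\right)^2 .
\]
The second summand is nonnegative, and since $\Theta = \langle N, \partial_z\rangle \in [-1,1]$ we have $1 - \Theta^2 \in [0,1]$; condition (C-0) then gives $\abs{r''/r - 2(r'/r)^2} \leq D + 2C^2$, so $g \geq -\Lambda$ with $\Lambda \coloneqq D + 2C^2$. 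At a spatial minimum of $\Theta$ we have $\partial_s\Theta = 0$ and $\Delta\Theta \geq 0$, whence $\partial_t\Theta_{\min} \geq g\,\Theta_{\min} \geq -\Lambda\,\Theta_{\min}$ as long as $\Theta_{\min} > 0$. Integrating this differential inequality yields $\Theta_{\min}(t) \geq \Theta_{\min}(0)\,e^{-\Lambda t} > 0$ for every finite $t \in [0,T)$. Since $F_0$ is a graph, $\Theta_{\min}(0) > 0$, and therefore $\Theta_t > 0$ throughout $[0,T)$, i.e.\ the graph property is preserved.

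For the uniform bound under (C-3) I would switch to the evolution equation for $v$, which is now legitimate because $\Theta > 0$ on all of $[0,T)$. The two key observations are that $v = \Theta^{-1} \geq 1$ (since $\Theta \leq 1$) and that (C-3) makes the ambient-curvature coefficient $(r r'' - 2{r'}^2)/r^2$ nonnegative. At a spatial maximum of $v$ the gradient term $-\tfrac{2}{v}\abs{\partial_s v}^2$ vanishes, $\Delta v \leq 0$, and both remaining zeroth-order terms $-\frac{r r'' - 2{r'}^2}{r^2}\left(v - \tfrac{1}{v}\right)$ and $-\left(\tfrac{r'}{r}\Theta - \kappa\right)^2 v$ are nonpositive (using $v - 1/v \geq 0$). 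Hence $\partial_t v_{\max} \leq 0$, so $v_{\max}(t) \leq v_{\max}(0)$ for all $t$, which is the asserted uniform bound.

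The main obstacle is not the computation but its rigorous justification: one must confirm that each zeroth-order term carries the sign claimed above---in particular that $v \geq 1$ always holds so that $v - 1/v \geq 0$, and that (C-3) is precisely what flips the sign of the ambient-curvature term---and one must invoke the maximum principle via Hamilton's trick correctly on the compact circle. If instead one insists on deriving preservation from the $v$-equation itself, a short continuity/bootstrap argument is required to exclude finite-time blow-up of $v$ before the estimate can be promoted to all of $[0,T)$; working with $\Theta$ as above sidesteps this difficulty entirely.
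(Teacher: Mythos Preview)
Your proposal is correct and follows essentially the same approach as the paper, which simply asserts that the lemma follows from the maximum principle applied to the evolution equations of Lemma~\ref{lem:theta} (the details being deferred to~\cite{F}). Your only variation is to run the first part through the $\Theta$-equation rather than the $v$-equation; this is a harmless and slightly cleaner choice, since it avoids the bootstrap step needed to keep $v$ finite while establishing positivity of $\Theta$.
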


At the end of this section, we recall Theorem 1.1 in \cite{F} for the main theorems.
\begin{theorem}[\cite{F}]\label{thm:warped}
Let $(\overline{M}, \overline{g}= r(z)^2 d \theta \otimes d \theta + d z \otimes d z)$ be a warped product of a unit circle $(\mathbb{S}^1, d \theta \otimes d \theta)$ and an open interval $(I, dz \otimes dz)$ with a smooth positive function $r \colon I \to \mathbb{R}$ satisfying (C-0). 
Suppose that $F \colon \mathbb{S}^1 \times [0,T) \to \overline{M}$ is a curve shortening flow with $F_0$ being a graph. 
Then the following statements hold:
\begin{itemize}
\item[(i)] The curve $F_t$ remains a graph for all $t \in [0,T)$;
\item[(ii)] The curve shortening flow $\{ F_t \}_t$ exists for all $t \in [0,\infty)$, i.e., $T = \infty$. 
\item[(iii)] If conditions (C-3) and  (C-4) hold, then we have 
\begin{equation*}
    \partial_{s}^{(m)} \kappa_t \to 0 \quad (t \to \infty)
\end{equation*}
for all $m \geq 0$.
\end{itemize}
\end{theorem}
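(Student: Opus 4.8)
The plan is to prove the three assertions in the order stated, reducing (i) to the maximum-principle argument recorded in Lemma~\ref{thm:graph}, and then exploiting the resulting graph structure to run the estimates behind (ii) and (iii).

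For (i), I would use the evolution equation for $\Theta$ from Lemma~\ref{lem:theta}. Both terms on its right-hand side carry $\Theta$ as a factor, so on any compact subinterval $[0,T']\subset[0,T)$ — where $\kappa$, and hence the coefficient $(r'/r\,\Theta-\kappa)^2+((r r''-2{r'}^2)/r^2)(1-\Theta^2)$, is bounded — the function $\Theta$ satisfies a linear parabolic equation of the form $(\partial_t-\Delta)\Theta=c\,\Theta$ with $c$ bounded. The scalar maximum principle then forbids $\Theta$ from reaching $0$, so $\Theta_0>0$ propagates to $\Theta_t>0$ for all $t\in[0,T)$. This is exactly the content of Lemma~\ref{thm:graph}.

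For (ii), the preserved graph property lets me write $F_t$ as a vertical graph $z=u(\theta,t)$ over $\mathbb{S}^1$ — legitimate because $F_0$ is not null-homotopic and $\Theta_t>0$ rules out vertical tangents — turning \eqref{eq:CSF} into a scalar quasilinear uniformly parabolic equation for $u$. Two a priori bounds then yield long-time existence. First, a height bound by barrier comparison: the horizontal circles $\{z=c\}$ move by curve shortening with downward speed $r'(c)/r(c)\le C$, so by the avoidance principle the curve stays below its initial top circle and above a circle descending at rate at most $C$; hence on every finite interval the curve is confined to a compact region $\{c_--Ct\le z\le c_+\}$, which rules out escape to $z=-\infty$ in finite time. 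Second, a gradient bound: feeding (C-0) into the evolution of $v=\Theta^{-1}$ from Lemma~\ref{lem:theta} bounds the middle term by a constant multiple (depending on $C,D$) of $v$, so $\max_x v(\cdot,t)$ grows at most exponentially and is therefore bounded on each $[0,T']$, i.e.\ $\Theta$ stays bounded away from $0$. With a compact spatial region, a uniform gradient bound, and bounded ambient curvature $\overline{K}=-r''/r$, standard Krylov--Safonov and Schauder bootstrapping give uniform $C^\infty$ bounds on $u$ up to any finite $T$; the curvature cannot blow up, the flow continues, and hence $T=\infty$.

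For (iii), under (C-3) Lemma~\ref{thm:graph} upgrades the gradient bound to a uniform one, so $\Theta\ge\Theta_0>0$ for all time. I would start from the length decay $L'(t)=-\int\kappa^2\,ds\le0$; since $L$ is monotone and bounded below, $\int_0^\infty\!\!\int\kappa^2\,ds\,dt<\infty$. Combining this with a bound on $\frac{d}{dt}\int\kappa^2\,ds$, obtained by an energy estimate for the curvature evolution $(\partial_t-\Delta)\kappa=\kappa^3+\overline{K}\kappa$, forces $\int\kappa^2\,ds\to0$. The crux is then an inductive sequence of integral estimates for $\int(\partial_s^m\kappa)^2\,ds$: integrating by parts in the evolution of $\partial_s^m\kappa$ and absorbing the reaction terms using the uniform lower bound on $\Theta$ (from (C-3)) together with the uniform bounds on all $r^{(i)}/r$ (from (C-4)) shows that each such energy is bounded and decays, after which interpolation and Sobolev embedding on $\mathbb{S}^1$ convert $L^2$-decay into the pointwise statement $\partial_s^{(m)}\kappa_t\to0$. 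The main obstacle is precisely this bootstrap: the reaction terms produced by the warped geometry must be controlled uniformly even though the curve may slide off to $z=-\infty$, and this is exactly where the scale-invariant hypotheses (C-3) and (C-4) are needed so that the constants in the estimates do not degenerate as $t\to\infty$.
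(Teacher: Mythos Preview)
This theorem is not proved in the present paper at all: it is merely \emph{recalled} from the author's previous paper~\cite{F} (note the citation in the theorem header and the sentence ``At the end of this section, we recall Theorem~1.1 in~\cite{F} for the main theorems'' just before its statement). Consequently there is no proof here to compare your proposal against. The only piece the paper supplies locally is Lemma~\ref{thm:graph} for part~(i), introduced with the single sentence ``By applying the maximal principle to the evolution equation for the function $v$, we obtain the graph-preserving property''; your maximum-principle argument via $\Theta$ is the same idea (and indeed $\Theta$ and $v=\Theta^{-1}$ carry equivalent sign information).

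That said, your outline for (ii) and (iii) is a reasonable reconstruction of what one would expect in~\cite{F}: barrier circles for a height bound, a maximum-principle bound on $v$ for the gradient, parabolic regularity to rule out finite-time singularities, and then energy/interpolation estimates for the curvature decay under (C-3)--(C-4). Two small remarks. First, the paper itself notes, immediately after the theorem, that part~(ii) can alternatively be obtained from Grayson's Theorem~\ref{thm:Grayson} (via the embedding of any bounded region into a convex-at-infinity surface, Remark~\ref{rem:convex at infinity}); this shortcut avoids the graph-PDE regularity machinery you invoke. Second, in your sketch of (iii) the phrase ``since $L$ is monotone and bounded below'' is slightly misleading: $L(t)$ may well tend to $0$ here (the curve can slide to $z=-\infty$ where $r$ may vanish), but the argument only needs $L\ge 0$ to get $\int_0^\infty\!\int\kappa^2\,ds\,dt\le L(0)$, so the conclusion is unaffected.
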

From this theorem, we obtain the long-time existence of the graphical curve shortening flow. 
However, we note that the long-time existence of the graphical curve shortening flow follows from Theorem \ref{thm:Grayson} independently.

\section{Proofs of the main theorems}\label{sec:proof}
We prove the main theorems in this section. Thanks to Grayson's theorem, the long-time existence of the curve shortening flow is guaranteed without the assumption that the initial curve is a graph (Lemma \ref{lem:longtime}). 
Note here that we always assume the condition (C-0) in this section. 
Now, we define $Z \colon M \times [0,T) \to \overline{M}$ by $Z(p,t) \coloneqq (p, z(t))$. 
The geodesic curvature of the curve $Z_t \coloneqq Z(\cdot, t)$ is given by $r'(z(t)) / r(z(t))$, and the equation \eqref{eq:CSF} is equivalent to the following equation:
\begin{equation}
\label{eq:standard}
    \frac{d z}{d t}(t) = - \frac{r'(z(t))}{r(z(t))}.
\end{equation}
The solution to \eqref{eq:standard} is used to control the motion of the curve shortening flow. 
First, we note that the curve shortening flow $Z$ obtained from \eqref{eq:standard} can be defined for all $t \in [0,\infty)$ by the assumption (C-0). 
Let $\{ Z_t \}_{t \in [0, \infty)}$ and $\{ \widetilde{Z}_t \}_{t \in [0, \infty)}$ be the curve shortening flow obtained from \eqref{eq:standard}.
Suppose also that $\{ F_t \}_{t \in [0,T)}$ is the curve shortening flow that satisfies $z(0) < \min_{p \in \mathbb{S}^1}z_0(p)$ and $\max_{p \in \mathbb{S}^1} z_0(p) < \widetilde{z}(0)$. 
Then, by the comparison principle of the curve shortening flow, we have
\begin{equation}\label{ineq:comp}
z(t) < \min_{p \in \mathbb{S}^1} z_t(p), \quad \max_{p \in \mathbb{S}^1} z_t(p) < \widetilde{z}(t)
\end{equation}
for all $t \in [0,T)$ (see Figure \ref{fig:comparison}). 
Hence, we obtain the following lemma.

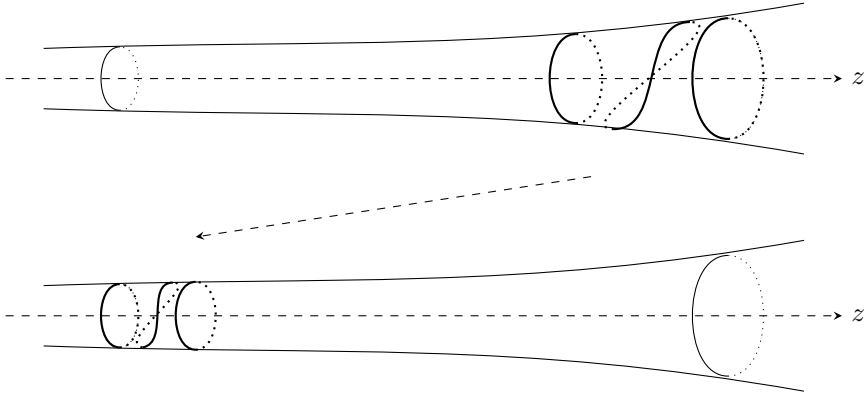
\begin{figure}
\centering
\begin{tikzpicture}[domain=-5:5,samples=100,>=stealth]
\draw[->,dashed] (-5.5,0) -- (5.5,0) node[right] {$z$};

\draw(-5,0.4)to[out=2,in=-170](5,1);
\draw(-5,-0.4)to[out=-2,in=170](5,-1);

\draw (4,-0.8) to [out=180,in=180] (4,0.8);
\draw[dotted] (4,-0.8) to [out=0,in=0] (4,0.8);

\draw (-4,-0.42) to [out=180,in=180] (-4,0.42);
\draw[dotted] (-4,-0.42) to [out=0,in=0] (-4,0.42);

\draw[thick] (4,-0.8) to [out=180,in=180] (4,0.8);
\draw[dotted,thick] (4,-0.8) to [out=0,in=0] (4,0.8);

\draw[thick] (2.5,-0.67) to [out=0,in=180] (3.5,0.75); 
\draw[dotted,thick] (2.5,-0.67) to [out=180,in=0] (3.5,0.75); 

\draw[thick] (2,-0.59) to [out=180,in=180] (2,0.59);
\draw[dotted,thick] (2,-0.59) to [out=0,in=0] (2,0.59);

\draw[->,dashed] (2.2,-1.3) to (-3,-2.1);

\end{tikzpicture}
\begin{tikzpicture}[domain=-5:5,samples=100,>=stealth]
\draw[->,dashed] (-5.5,0) -- (5.5,0) node[right] {$z$};

\draw(-5,0.4)to[out=2,in=-170](5,1);
\draw(-5,-0.4)to[out=-2,in=170](5,-1);

\draw (4,-0.8) to [out=180,in=180] (4,0.8);
\draw[dotted] (4,-0.8) to [out=0,in=0] (4,0.8);

\draw (-4,-0.42) to [out=180,in=180] (-4,0.42);
\draw[dotted] (-4,-0.42) to [out=0,in=0] (-4,0.42);

\draw[thick] (-3,-0.45) to [out=180,in=180] (-3,0.45);
\draw[dotted,thick] (-3,-0.45) to [out=0,in=0] (-3,0.45);

\draw[thick] (-3.7,-0.42) to [out=10,in=180] (-3.3,0.44); 
\draw[dotted,thick] (-3.7,-0.42) to [out=170,in=0] (-3.3,0.44); 

\draw[thick] (-4,-0.42) to [out=180,in=180] (-4,0.42);
\draw[dotted,thick] (-4,-0.42) to [out=0,in=0] (-4,0.42);

\end{tikzpicture}
\caption{Comparison principle}
\label{fig:comparison}
\end{figure}

\begin{lemma}
\label{lem:longtime}
Let $F_0 \colon \mathbb{S}^1 \to \overline{M}$ be a smooth embedded curve that is not null-homotopic. 
Then, the curve shortening flow $\{ F_t \}_t$ with the initial curve $F_0$ exists for all $t \in [0,\infty)$. 
Furthermore, we have
\begin{equation*}
\max_{p \in \mathbb{S}^1} z_t(p) \to -\infty \quad (t \to \infty).
\end{equation*}
\end{lemma}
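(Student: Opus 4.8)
The plan is to confine the flow between the two barrier circles $\{Z_t\}$ and $\{\widetilde Z_t\}$ coming from the ODE \eqref{eq:standard}, and then to feed this confinement into Grayson's dichotomy (Theorem \ref{thm:Grayson}) together with a homotopy argument; the statement $\max_p z_t(p)\to-\infty$ will then reduce to the asymptotics of a single ODE solution. \textbf{Confinement.} I would first choose levels $z(0)<\min_p z_0(p)$ and $\widetilde z(0)>\max_p z_0(p)$ and let $z(t),\widetilde z(t)$ solve \eqref{eq:standard}, which exist for all $t\in[0,\infty)$ by (C-0) (the bound $r'/r\le C$ prevents finite-time blow-up). The comparison principle \eqref{ineq:comp} then gives $z(t)<z_t(p)<\widetilde z(t)$ for all $p$ and all $t\in[0,T)$. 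Since $z$ and $\widetilde z$ are strictly decreasing and continuous, on any interval $[0,T']$ with $T'<\infty$ the image $F_t(\mathbb{S}^1)$ stays inside the compact set $[z(T'),\widetilde z(0)]\times\mathbb{S}^1$.

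\textbf{Long-time existence.} Suppose toward a contradiction that $T<\infty$. Applying the confinement with $T'=T$ traps the flow in a fixed compact set $K$. Because $\overline M$ is not convex at infinity, Theorem \ref{thm:Grayson} does not apply verbatim; I would therefore modify $\overline g$ outside a compact neighborhood of $K$ to obtain a surface $\overline M'$ that is convex at infinity and coincides with $\overline M$ on $K$. The barrier circles, and hence the confinement, are unchanged, since they depend only on the metric near the levels $z(t),\widetilde z(t)\in K$; consequently the curve shortening flow of $F_0$ on $\overline M'$ agrees with the one on $\overline M$ throughout $[0,T)$. Theorem \ref{thm:Grayson} then forces $F_t(\mathbb{S}^1)$ to converge to a point as $t\to T$. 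But each $F_t$ is embedded and isotopic to $F_0$, hence freely homotopic to it; a curve collapsing to a point eventually lies in a contractible geodesic ball and is null-homotopic, contradicting the hypothesis that $F_0$ is not null-homotopic. Therefore $T=\infty$.

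\textbf{Escape to infinity.} By the confinement, $\max_p z_t(p)<\widetilde z(t)$ for all $t$, so it suffices to show $\widetilde z(t)\to-\infty$. Since $r'/r>0$ on $I$, the function $\widetilde z$ is strictly decreasing. If it were bounded below it would converge to a finite limit $\ell\in I$, and then \eqref{eq:standard} together with continuity would give $d\widetilde z/dt\to -r'(\ell)/r(\ell)<0$, so that $d\widetilde z/dt$ is eventually bounded above by a negative constant and $\widetilde z(t)\to-\infty$, a contradiction. Hence $\widetilde z(t)\to-\infty$, and therefore $\max_p z_t(p)\to-\infty$ as $t\to\infty$.

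The main obstacle is the long-time existence step: justifying Grayson's dichotomy on a surface that fails to be convex at infinity. The barrier confinement removes the escape-to-infinity issue that the convexity hypothesis would otherwise control, but one must still argue carefully that the metric modification leaves the flow unchanged on the relevant time interval and that a finite-time singularity is incompatible with the preserved free homotopy class of $F_0$. By contrast, the ODE asymptotics in the escape step is routine once the comparison barrier is in place.
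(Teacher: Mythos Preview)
Your proposal is correct and follows essentially the same route as the paper: confinement by the barrier circles from \eqref{eq:standard}, a metric modification outside the compact trapping region to invoke Grayson's theorem (this is precisely the content of Remark~\ref{rem:convex at infinity}), the homotopy contradiction with shrinking to a point, and finally the ODE asymptotics $\widetilde z(t)\to-\infty$ combined with \eqref{ineq:comp}. Your write-up is in fact more explicit than the paper's in justifying why $\widetilde z(t)\to-\infty$ and why the modified and original flows coincide on $[0,T)$.
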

\begin{proof}
Suppose that the maximal time of existence $T$ is finite; then by the above comparison principle argument, the curve shortening flow $\{ F_t \}_{t \in [0,T)}$ must lie within a compact region of $\overline{M}$. 
Hence, by Remark \ref{rem:convex at infinity}, we can apply Theorem \ref{thm:Grayson} in this situation. 
However, since the curve $F_0$ is not null-homotopic, the curve $F_t$ cannot shrink to a point. This leads to a contradiction. 
Thus, the curve shortening flow $\{ F_t \}_{t}$ with the initial curve $F_0$ exists for all $t \in [0,\infty)$. 
The second statement follows from \eqref{ineq:comp} since $\widetilde{z}(t) \to -\infty$ as $t \to \infty$ holds for the curve shortening flow $\widetilde{Z}(p,t) = (p,\widetilde{z}(t))$ obtained from \eqref{eq:standard}. 
\end{proof}
\begin{remark}\label{rem:convex at infinity}
The bounded subset of the warped product $(\overline{M}, \overline{g})$ can be isometrically embedded into a surface that is convex at infinity. 
Suppose that the subset is contained in $\mathbb{S}^1 \times (a_0, a)$. 
Then, define a smooth positive function $\widetilde{r} \colon (-\infty, a) \to (0, \infty)$ by
\begin{equation*}
\widetilde{r}(z) \coloneqq
\begin{cases}
r(z) + (z-a_0)^2 e^{1/(z-a_0)}  &\text{for }z \in  (-\infty, a_0) \\
r(z) &\text{for }z \in [a_0, a).
\end{cases}
\end{equation*}
By this definition, there exists a number $a_1 \in (-\infty, a_0)$ such that $\widetilde{r}'(z) < 0$ for all $z \in (-\infty, a_1)$. 
Hence, the warped product of the unit circle $\mathbb{S}^1$ and the open interval $(-\infty, a)$ with the warping function $\widetilde{r}$ is convex at infinity (see Figure \ref{fig:warping}). Therefore, we can apply Theorem \ref{thm:Grayson} to the curve shortening flow in the bounded subset.
\begin{figure}
\centering
\begin{tikzpicture}[domain=-11:-1,samples=100,>=stealth]
\draw[->,dashed] (-11.5,0) -- (-0.5,0) node[right] {$z$};

\draw plot (\x, {0.2 -1/\x});
\draw[domain=-11:-6] plot (\x, {0.2 -1/\x + pow(\x + 6, 2)/20});

\draw (-11, 0.3) node[left] {$r(z)$};
\draw (-11, 1.6) node[left] {$\widetilde{r}(z)$};
\draw[dotted] (-6, 0.35) -- (-6,0) node[below] {$a_0$};
\end{tikzpicture}
\caption{Construction of the warping function}
\label{fig:warping}
\end{figure}
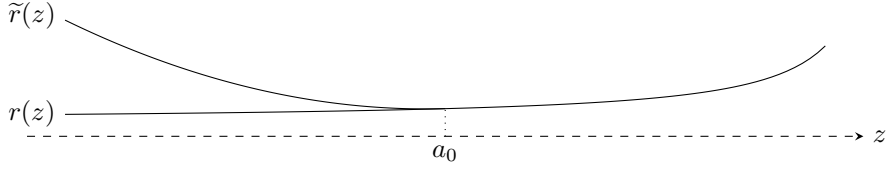
\end{remark}

From Lemma \ref{lem:longtime}, we can consider the curve shortening flow $\{ F_t \}_{t \in [0,\infty)}$ defined for all time, provided that the initial curve is not null-homotopic. 
Therefore, throughout the rest of this section, we assume such a curve shortening flow. 
The following lemma is essential for the main theorem. The assumption on the angle function means that the curve is not a graph for all time along the curve shortening flow when we take $c_0 = 0$.
\begin{lemma}\label{lem:notgraph}
Let $0 \leq c_0 < 1$ be a constant and suppose that 
\begin{equation*}
\min_{p \in \mathbb{S}^1} \Theta_t (p) \leq c_0
\end{equation*}
for all $t \in [0,\infty)$. 
Then, we have
\begin{equation*}
\frac{1}{2 L(t)^2} - \frac{\max_{p \in \mathbb{S}^1} {\kappa_t(p)}^2}{{d_0}^2} \leq \frac{1}{{d_0}^2} \max_{p \in \mathbb{S}^1}\left( \frac{r'(z_t(p))}{r(z_t(p))} \right)^2 ,
\end{equation*}
where $d_0 \coloneqq 1 - c_0 > 0$.
\end{lemma}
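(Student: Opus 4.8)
The plan is to work at a fixed time $t$ (suppressing it in the notation), since the hypothesis $\min_{p}\Theta_t(p)\le c_0$ and the claimed inequality both refer to the same instant. The idea is to exhibit two points of the curve across which $\Theta$ decreases by at least $d_0=1-c_0$, and then to turn this deficit into a lower bound for $\int\abs{\partial_s\Theta}\,ds$ via the first variation formula \eqref{dsTheta} and Cauchy--Schwarz. From the computation in the proof of Lemma \ref{lem:theta} I would first record the pointwise identities $\partial_s\Theta=\left(\frac{r'}{r}\Theta-\kappa\right)\langle N,E_\theta\rangle$, $\langle N,E_\theta\rangle^2=1-\Theta^2$, and $\partial_s z=-\langle N,E_\theta\rangle$. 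These give $\abs{\partial_s\Theta}=\abs{\frac{r'}{r}\Theta-\kappa}\sqrt{1-\Theta^2}$ and, since $\abs{\langle N,E_\theta\rangle}\le1$, the bound $1-\Theta^2=(\partial_s z)^2\le1$, both of which will be used below.

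The geometric core of the proof is the claim that $\max_{p}\Theta_t(p)=1$. Since the curve is embedded and not null-homotopic (a property preserved by the flow, as used in Lemma \ref{lem:longtime}), it separates $\overline{M}$ into two components, and I orient $N$ so that it points into the component containing the upper end $z\to a$; this is precisely the orientation for which the graph condition reads $\Theta>0$. Let $p_1\in\mathbb{S}^1$ be a point where the height $z_t$ attains its maximum. There $\partial_s z=0$, so $\langle N,E_\theta\rangle=0$ and hence $\Theta(p_1)=\pm1$; because the points immediately above $p_1$ in the $+\partial_z$ direction lie in the upper component, the chosen normal satisfies $N=\partial_z$ at $p_1$, i.e. $\Theta(p_1)=1$. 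Taking $p_0\in\mathbb{S}^1$ with $\Theta(p_0)=\min_p\Theta\le c_0$, I obtain $\Theta(p_1)-\Theta(p_0)\ge1-c_0=d_0>0$; in particular $p_0\neq p_1$.

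Finally I would let $\gamma$ be an arc of the curve joining $p_0$ to $p_1$, of length $\ell\le L$. By the fundamental theorem of calculus and Cauchy--Schwarz,
\[
d_0\le\Theta(p_1)-\Theta(p_0)=\int_\gamma\partial_s\Theta\,ds\le\int_\gamma\abs{\partial_s\Theta}\,ds\le\left(\int_\gamma\left(\frac{r'}{r}\Theta-\kappa\right)^2 ds\right)^{1/2}\left(\int_\gamma(1-\Theta^2)\,ds\right)^{1/2}.
\]
Using $1-\Theta^2\le1$ the second factor satisfies $\int_\gamma(1-\Theta^2)\,ds\le\ell$, while the elementary inequality $\left(\frac{r'}{r}\Theta-\kappa\right)^2\le2\left(\frac{r'}{r}\right)^2\Theta^2+2\kappa^2\le2\max_p(r'/r)^2+2\max_p\kappa^2$ (together with $\Theta^2\le1$) gives $\int_\gamma\left(\frac{r'}{r}\Theta-\kappa\right)^2 ds\le2\ell\bigl(\max_p\kappa^2+\max_p(r'/r)^2\bigr)$. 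Multiplying the two bounds and using $\ell\le L$ yields $d_0^2\le2\ell^2\bigl(\max_p\kappa^2+\max_p(r'/r)^2\bigr)\le2L^2\bigl(\max_p\kappa^2+\max_p(r'/r)^2\bigr)$, which is exactly the asserted inequality after dividing by $2L^2d_0^2$ and rearranging. The only genuinely delicate step is the identification $\max_p\Theta=1$ in the previous paragraph: it is where embeddedness, non-null-homotopy, and the canonical orientation of $N$ enter, whereas the rest is routine. A minor point to verify is that this choice of orientation of $N$ agrees with the one under which the evolution equations were derived, so that \eqref{dsTheta} applies verbatim.
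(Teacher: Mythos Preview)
Your proof is correct and follows essentially the same route as the paper: exhibit a point where $\Theta=1$, compare with a point where $\Theta\le c_0$, and convert the resulting oscillation bound $d_0\le\int|\partial_s\Theta|\,ds$ into the desired inequality via the formula \eqref{dsTheta}. The only cosmetic differences are that the paper splits $|r'\Theta/r-\kappa|\le|r'/r|+|\kappa|$ by the triangle inequality and then squares the sum of integrals, whereas you use Cauchy--Schwarz followed by $(a-b)^2\le 2a^2+2b^2$ pointwise; and you justify in detail the existence of a point with $\Theta=1$ (via the maximum of $z_t$ and the orientation of $N$), which the paper simply asserts.
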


\begin{proof}
Let us take points $p_t$, $q_t \in \mathbb{S}^1$ so that they satisfy $\Theta_t(p_t) = 1$ and $\Theta_t(q_t) = \min_{p \in \mathbb{S}^1} \Theta_t (p)$, then we have
\begin{align*}
d_0
= 1 - c_0 
\leq \abs{\Theta_t(p_t) - \Theta_t(q_t)} 
\leq \abs{\int_{q_t}^{p_t} \partial_s \Theta_t d s} 
\leq \int \abs{\partial_s \Theta_t} d s.
\end{align*}
From \eqref{dsTheta}, the derivative of the angle function is given by
\begin{equation*}
\partial_s \Theta_t = \left( \frac{r'(z_t)}{r(z_t)} \Theta_t - \kappa_t \right) \langle N_t, E_{\theta} \rangle.
\end{equation*}
Hence, it follows that 
\begin{align*}
d_0
&\leq \int \abs{\partial_s \Theta_t} d s \\
&= \int \abs{\frac{r'(z_t)}{r(z_t)} \Theta_t - \kappa_t} \abs{\langle N_t, E_{\theta} \rangle} d s \\
&\leq \int \abs{\frac{r'(z_t)}{r(z_t)}} d s + \int \abs{\kappa_t} d s,
\end{align*}
since $\abs{\Theta_t} \leq 1$ and $\abs{\langle N_t, E_{\theta} \rangle} \leq 1$. 
Then, we have
\begin{align}
{d_0}^2 
&\leq \left( \int \abs{\frac{r'(z_t)}{r(z_t)}} d s + \int \abs{\kappa_t} d s \right)^2 \notag\\
&\leq 2 \left( \int \abs{\frac{r'(z_t)}{r(z_t)}} d s \right)^2 + 2 \left( \int \abs{\kappa_t} d s \right)^2  \label{ineq}\\
&\leq 2 L(t)^2 \max_{p \in \mathbb{S}^1}\left( \frac{r'(z_t(p))}{r(z_t(p))} \right)^2  + 2 L(t)^2 \max_{p \in \mathbb{S}^1} {\kappa_t (p)}^2. \notag
\end{align}
Thus, we obtain the desired inequality:
\begin{equation*}
\frac{1}{2 L(t)^2} - \frac{\max_{p \in \mathbb{S}^1} {\kappa_t(p)}^2}{{d_0}^2} \leq \frac{1}{{d_0}^2} \max_{p \in \mathbb{S}^1}\left( \frac{r'(z_t(p))}{r(z_t(p))} \right)^2 .
\end{equation*}
\end{proof}

The calculation of the following lemma is based on \cite{gage1990curve} and \cite{grayson1989shortening}. 
However, the key difference in our case is that the length $L(t)$ of the curve may approach zero as $t \to \infty$, which requires certain modifications.
\begin{lemma}\label{lem:curvature}
Let $0 < d_0 \leq 1$ and $A_0 \geq 0$ be some constants. If the inequality
\begin{equation*}
\frac{1}{2 L(t)^2} - \frac{\max_{p \in \mathbb{S}^1} {\kappa_t(p)}^2}{{d_0}^2} \leq A_0
\end{equation*}
holds for all $t \in [0,\infty)$, then we have
\begin{equation*}
L(t) \int \kappa_t^2 d s \to 0 \quad (t \to \infty).
\end{equation*}
\end{lemma}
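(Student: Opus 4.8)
\emph{Proof idea.} Write $\mathcal E(t)\coloneqq\int\kappa_t^2\,ds$ and $g(t)\coloneqq L(t)\,\mathcal E(t)=L(t)\int\kappa_t^2\,ds$. The plan is to show that $g$ is integrable in time, and then to promote this to the statement $g(t)\to 0$ by controlling $g'$.

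First I would exploit that the flow decreases length: $L'(t)=-\int\kappa_t^2\,ds=-\mathcal E(t)\le 0$, so $L$ is non-increasing and tends to some $L_\infty\ge 0$. Since
\begin{equation*}
\frac{d}{dt}L(t)^2=2L(t)L'(t)=-2L(t)\int\kappa_t^2\,ds=-2g(t),
\end{equation*}
integrating in time gives
\begin{equation*}
\int_0^\infty g(t)\,dt=\tfrac12\bigl(L(0)^2-L_\infty^2\bigr)\le\tfrac12 L(0)^2<\infty .
\end{equation*}
Thus $g\ge 0$ belongs to $L^1([0,\infty))$, so $\liminf_{t\to\infty}g(t)=0$; it remains only to rule out recurring upward spikes of $g$.

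Next I would compute the evolution of $\mathcal E$. Using $\partial_t(ds)=-\kappa^2\,ds$ together with the standard evolution of the geodesic curvature on a surface, $(\partial_t-\Delta)\kappa=\kappa^3+\overline K\,\kappa$ with $\overline K=-r''/r$ and $|\overline K|\le D$ by (C-0), integration by parts yields
\begin{equation*}
\mathcal E'(t)=-2\int(\partial_s\kappa_t)^2\,ds+\int\kappa_t^4\,ds+2\int\overline K\,\kappa_t^2\,ds .
\end{equation*}
Combined with $L'=-\mathcal E$ this gives
\begin{equation*}
g'(t)=-\mathcal E(t)^2-2L\int(\partial_s\kappa_t)^2\,ds+L\int\kappa_t^4\,ds+2L\int\overline K\,\kappa_t^2\,ds .
\end{equation*}
The ambient term is controlled by $2L\int\overline K\,\kappa_t^2\,ds\le 2D\,g$. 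For the quartic term I would invoke a scale-invariant Gagliardo--Nirenberg (Agmon) inequality for closed curves, $\max_{p}\kappa_t^2\le c\,L^{-1}\mathcal E+c\,\mathcal E^{1/2}\bigl(\int(\partial_s\kappa_t)^2\,ds\bigr)^{1/2}$, whence $\int\kappa_t^4\,ds\le\mathcal E\max_p\kappa_t^2$ and, after Young's inequality, the leftover $\int(\partial_s\kappa_t)^2\,ds$ is absorbed into the dissipative term $-2L\int(\partial_s\kappa_t)^2\,ds$. The hypothesis enters precisely here, through the lower bound $\max_p\kappa_t^2\ge d_0^2\bigl(\tfrac1{2L^2}-A_0\bigr)$, which is the substitute---valid even when $L\to 0$---for the uniform lower bound on length available in \cite{gage1990curve,grayson1989shortening}.

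The aim of these manipulations is a differential inequality which, integrated in time using $g\ge 0$ and $\int_0^\infty g\,dt<\infty$, yields the time-integral bounds $\int_0^\infty\mathcal E^2\,dt<\infty$, $\int_0^\infty L\int(\partial_s\kappa_t)^2\,ds\,dt<\infty$, and $\int_0^\infty L\int\kappa_t^4\,ds\,dt<\infty$; these together say $g'\in L^1([0,\infty))$. Granting that, $g(t)=g(0)+\int_0^t g'$ has a limit as $t\to\infty$, and since $g\in L^1$ the limit must be $0$, which is exactly $L(t)\int\kappa_t^2\,ds\to 0$. I expect the main obstacle to be the regime $L(t)\to 0$: there $\mathcal E=\int\kappa_t^2\,ds$ may itself blow up, so every estimate must be kept scale-invariant and the interpolation inequality must be balanced against the dissipation with sharp enough constants, the curvature lower bound from the hypothesis compensating for the absent lower bound on $L$. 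Tracking these scale-invariant constants, rather than any single conceptual step, is the crux.
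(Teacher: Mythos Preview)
Your plan is to show $g'\in L^1([0,\infty))$ globally and then conclude from $g\in L^1$ that $\lim g=0$. The problem is that the absorption you hope to perform---bounding $L\int\kappa^4\,ds$ by a small multiple of $L\int(\partial_s\kappa)^2\,ds$ plus $L^1$ terms via Agmon and Young---does not close without already knowing that $g$ is small. Any sharp form of the interpolation on a circle gives $\max_p\kappa^2\le\tfrac{2}{L}\mathcal E+2L\int(\partial_s\kappa)^2\,ds$ (this is the inequality the paper actually uses), whence $L\int\kappa^4\,ds\le g\max_p\kappa^2\le 2\mathcal E^2+2g\cdot L\int(\partial_s\kappa)^2\,ds$. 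The second term absorbs into $-2L\int(\partial_s\kappa)^2\,ds$ only if $g<1$, and the residual $2\mathcal E^2$ beats the $-\mathcal E^2$ already present in $g'$. Moreover, the hypothesis supplies a \emph{lower} bound for $\max_p\kappa^2$, which is the wrong direction for bounding the quartic term from above; it only becomes useful after the dissipation has been traded for a term $-c\,\psi\max_p\kappa^2$ with a sufficiently large coefficient $c$, and producing that coefficient is exactly what requires $\psi$ small. So ``tracking scale-invariant constants'' is not merely bookkeeping here: the constants genuinely do not close globally.

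The paper's proof never attempts a global $L^1$ bound on $g'$. It argues by contradiction on a localized interval. One first uses $\int_0^\infty\mathcal E\,dt\le L(0)$ (rather than your $\int_0^\infty g\,dt\le L(0)^2/2$) to find, for any $\varepsilon<d_0^2/(4+d_0^2)$, a time $T_1$ with $\psi(T_1)<\varepsilon/2$ and $\int_{T_1}^\infty\mathcal E\,dt<\varepsilon^2$. Assuming $\psi$ first reaches $\varepsilon$ at some $T_2>T_1$, one has $\psi\le\varepsilon$ on $[T_1,T_2]$ \emph{by construction}. This smallness lets one replace $-\max_p\kappa^2$ by $-\tfrac{4+d_0^2}{d_0^2}\psi\max_p\kappa^2$ and then combine $\tfrac{2}{L^2}\psi$ with $-\tfrac{4}{d_0^2}\psi\max_p\kappa^2$ via the hypothesis to obtain $\psi'\le(4A_0+2D)L(0)\,\mathcal E$ on $[T_1,T_2]$. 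Integrating contradicts the choice of $\varepsilon$. This localization to an interval where $\psi$ is already small is the idea missing from your proposal.
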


\begin{proof}
Let $\psi$ be a function defined by $\psi(t) \coloneqq L(t) \int \kappa_t^2 ds$. 
The derivative of the length function $L(t)$ is given by 
\begin{equation*}
\frac{d}{d t} L(t) = \frac{d}{d t} \int ds = - \int \kappa_t^2 ds,
\end{equation*}
so we obtain
\begin{equation*}
\int_0^{\infty} \int \kappa^2 ds dt \leq L(0).
\end{equation*}
This implies that we can take a sequence $\{ t_n \}_{n \in \mathbb{N}}$ such that $\int \kappa_t^2 ds \big\vert_{t_n} \to 0$. 
We also have $\psi(t_n) \to 0$. 
We prove $\psi(t) \to 0$ as $t \to \infty$. 
For any positive constant $\varepsilon$ that satisfies
\begin{equation}\label{ineq:epsilon}
\varepsilon <  \min \left\{ \frac{{d_0}^2}{4+{d_0}^2}, \frac{1}{2L(0)(4 A_0 + 2 D)} \right\},
\end{equation}
there is a time $T_1$ such that 
\begin{align}
\psi(T_1) &< \frac{\varepsilon}{2}, \label{ineq:psi}\\
\int_{T_1}^{\infty} \int \kappa^2 ds dt &< \varepsilon^2. \label{ineq:integral}
\end{align}
We prove by contradiction that $\psi(t) < \varepsilon$ holds for all $t > T_1$. 
Suppose this is not the case. 
Then, there exists a time $T_2 > T_1$ such that $\psi(T_2) = \varepsilon$ for the first time. 
Now, we compute the derivative of $\psi(t)$:
\begin{align*}
\frac{d}{d t} \psi(t)
&= - \left( \int \kappa_t^2 ds \right)^2 + L(t) \int \frac{\partial}{\partial t} \kappa^2 - \kappa_t^4 ds \\
&=  - \left( \int \kappa_t^2 ds \right)^2 + L(t) \int \Delta \kappa_t^2 -2 \abs{\partial_s \kappa_t}^2 + 2 \kappa_t^4 - 2\kappa_t^2 \frac{r''}{r} - \kappa_t^4 ds \\
&\leq -2 L(t) \int \abs{\partial_s \kappa_t}^2 ds  +  L(t) \max_{p \in \mathbb{S}^1} \kappa_t(p)^2 \int \kappa_t^2 ds + 2 D L(0)\int \kappa_t^2 ds,
\end{align*}
where we use $\abs{r''(z) / r(z)} \leq D $ and the evolution equation for the curvature $\kappa$:
\begin{equation*}
(\partial_t - \Delta) \kappa^2 = -2 \abs{\partial_s \kappa}^2 + 2 \kappa^2 \left( \kappa^2 +\overline{K} \right). 
\end{equation*}
Here, we have $\abs{\kappa_t(p) - \kappa_t(q)} \leq \int \abs{\partial_s \kappa_t} d s$ for all $p, q \in \mathbb{S}^1$, hence we obtain
\begin{equation*}
\kappa_t(p)^2  
\leq \left( \abs{\kappa_t(q)} + \int \abs{\partial_s \kappa_t} d s \right)^2 
\leq 2 \kappa_t(q)^2 + 2 \left( \int \abs{\partial_s \kappa_t} ds  \right)^2.
\end{equation*}
We also have $L(t) \min_{q \in \mathbb{S}^1} \kappa_t(q)^2 \leq \int \kappa_t^2 d s$. Thus, we conclude that
\begin{equation*}
\max_{p \in \mathbb{S}^1} \kappa_t(p)^2 
\leq 2 \min_{q \in \mathbb{S}^1} \kappa_t(q)^2 + 2 \left( \int \abs{\partial_s \kappa_t} ds  \right)^2 
\leq \frac{2}{L(t)} \int \kappa_t^2 ds + 2 L(t) \int \abs{\partial_s \kappa_t}^2 ds.
\end{equation*}
From this inequality, it follows that
\begin{equation*}
- 2 L(t) \int \abs{\partial_s \kappa_t}^2 ds \leq \frac{2}{L(t)} \int \kappa_t^2 ds - \max_{p \in \mathbb{S}^1} \kappa_t(p)^2.
\end{equation*}
Thus, we obtain
\begin{align*}
\frac{d}{d t} \psi(t) 
&\leq  \frac{2}{L(t)} \int \kappa_t^2 ds - \max_{p \in \mathbb{S}^1} \kappa_t(p)^2 +  L(t) \max_{p \in \mathbb{S}^1} \kappa_t(p)^2 \int \kappa_t^2 ds + 2 D L(0)\int \kappa_t^2 ds \\
&=\frac{2}{L(t)} \int \kappa_t^2 ds  - \max_{p \in \mathbb{S}^1} \kappa_t(p)^2  + \psi(t) \max_{p \in \mathbb{S}^1} \kappa_t(p)^2 + 2 D L(0)\int \kappa_t^2 ds.
\end{align*}
For $T_1 \leq t \leq T_2$, we have 
\begin{equation*}
0 \leq \psi(t) \leq \varepsilon < \frac{{d_0}^2}{4+{d_0}^2},
\end{equation*}
hence, it follows that
\begin{equation*}
- \max_{p \in \mathbb{S}^1} \kappa_t(p)^2 \leq - \frac{4+{d_0}^2}{{d_0}^2} \psi(t) \max_{p \in \mathbb{S}^1} \kappa_t(p)^2.
\end{equation*}
Therefore, we have the following inequality:
\begin{equation}\label{ineq:dpsi}
\begin{aligned}
\frac{d}{d t} \psi(t) 
&\leq  \frac{2}{L(t)} \int \kappa_t^2 ds - \frac{4+{d_0}^2}{{d_0}^2} \psi(t) \max_{p \in \mathbb{S}^1} \kappa_t(p)^2 + \psi(t) \max_{p \in \mathbb{S}^1} \kappa_t(p)^2 + 2 D L(0)\int \kappa_t^2 ds \\
&=\frac{2}{L(t)^2}\psi(t) - \frac{4}{{d_0}^2} \psi(t) \max_{p \in \mathbb{S}^1} \kappa_t(p)^2  + 2 D L(0)\int \kappa_t^2 ds \\
&\leq 4 \left( \frac{1}{2 L(t)^2} -  \frac{\max_{p \in \mathbb{S}^1} \kappa_t(p)^2}{{d_0}^2} \right)  \psi(t)  + 2 D L(0)\int \kappa_t^2 ds \\
&\leq (4 A_0 + 2 D ) L(0) \int \kappa_t^2 ds,
\end{aligned}
\end{equation}
where we use the monotonicity of the length function $L(t)$ and the assumption stated in the lemma:
\begin{equation*}
\frac{1}{2 L(t)^2} - \frac{\max_{p \in \mathbb{S}^1} {\kappa_t(p)}^2}{{d_0}^2} \leq A_0.
\end{equation*}
From \eqref{ineq:epsilon}, \eqref{ineq:psi}, and \eqref{ineq:integral}, we integrate the inequality \eqref{ineq:dpsi} to obtain
\begin{align*}
\frac{\varepsilon}{2} 
&< \psi(T_2) - \psi(T_1) \\
&\leq  (4 A_0 + 2 D ) L(0) \int_{T_1}^{T_2} \int \kappa^2 ds dt \\
&\leq \frac{1}{2\varepsilon}  \int_{T_1}^{\infty} \int \kappa^2 ds dt \\
&\leq \frac{\varepsilon}{2}.
\end{align*}
This is a contradiction, and hence, we have
\begin{equation*}
\psi(t) < \varepsilon
\end{equation*}
for all $t > T_1$. 
Thus, it follows that $\psi(t) \to 0$ as $t \to \infty$.
\end{proof}

Finally, we are prepared to present the proofs of the main theorems.

\begin{proof}[Proof of Theorem \ref{thm:1} and Theorem \ref{thm:2}]
Suppose that the curve shortening flow $\{ F_t \}_{t \in [0,\infty)}$ is not a graph for all time, that is, suppose that
\begin{equation*}
\min_{p \in \mathbb{S}^1} \Theta_t(p) \leq 0
\end{equation*}
for all $t \in [0,\infty)$. 
By Lemma \ref{lem:notgraph} and Lemma \ref{lem:curvature}, we have $L(t) \int \kappa^2 d s \to 0$.
From the inequality \eqref{ineq} with $d_0 = 1$, we have
\begin{equation}\label{ineq2}
1 
\leq 2 \left( \int \abs{\frac{r'(z_t)}{r(z_t)}} d s \right)^2 + 2 \left( \int \abs{\kappa_t} d s \right)^2 
\leq 2 L(t)^2 \max_{p \in \mathbb{S}^1}\left( \frac{r'(z_t(p))}{r(z_t(p))} \right)^2 + 2 L(t) \int \kappa_t^2 d s.
\end{equation}
If the condition (C-1) holds, then the right-hand side of \eqref{ineq2} approaches zero as $t \to \infty$, since $\max_{p \in \mathbb{S}^1} z_t(p) \to -\infty$ holds from Lemma \ref{lem:longtime}. This leads to a contradiction. 
Likewise, from \eqref{def:const} and \eqref{ineq2}, it follows that 
\begin{equation*}
1 
\leq 2 L(t)^2 \max_{p \in \mathbb{S}^1}\left( \frac{r'(z_t(p))}{r(z_t(p))} \right)^2 + 2 L(t) \int \kappa_t^2 d s 
\leq 2 L(0)^2 C^2 + 2 L(t) \int \kappa_t^2 d s.
\end{equation*}
Taking the limit, we obtain
\begin{equation*}
1 \leq 2 L(0)^2 C^2.
\end{equation*}
Hence, if the condition (C-2) holds, that is, the length of the initial curve satisfies
\begin{equation*}
L(0)^2 < \frac{1}{2 C^2},
\end{equation*}
this leads to a contradiction. Thus, there exists some time $T_0$ such that 
\begin{equation*}
\min_{p \in \mathbb{S}^1} \Theta_{T_0}(p) > 0,
\end{equation*}
which means that the curve $F_{T_0}$ is a graph. This proves Theorem \ref{thm:1}. 
Finally, Theorem \ref{thm:2} follows directly from Theorem \ref{thm:1}, Theorem \ref{thm:warped}, and Lemma \ref{lem:longtime}.
\end{proof}

\begin{remark}
We would like to make a remark on Theorem \ref{thm:1}. 
In this paper, we assume that the warping function $r$ is strictly increasing. 
If there are points where $r'$ vanish, we obtain the same result as Theorem \ref{thm:1} by using Lemma A.2 from Gage's paper \cite{gage1990curve}. 
The lemma roughly states that if a closed curve is sufficiently close to a certain closed geodesic and its geodesic curvature is sufficiently small, then the curve can be described non-parametrically over the geodesic. 
To apply the lemma, we assume that the warping function $r \colon (-a, a) \to (0, \infty)$ satisfies that
$r'(0) = 0$, $r'(z) > 0$ for $0 < z < a$, and $r'(z) < 0$ for $-a < z < 0$ for simplicity.
Under these conditions, the curve $\gamma \colon \mathbb{S}^1 \to \overline{M}$ defined by $\gamma(p) \coloneqq (p, 0)$ is a geodesic. 
By the comparison principle, we conclude that the curve approaches the geodesic $\gamma$ under the curve shortening flow. 
Furthermore, the curvature of the curve converges to zero by Theorem \ref{thm:Grayson}. 
Thus, from Lemma A.2 in \cite{gage1990curve}, the curve becomes a graph at a certain time along the curve shortening flow (see Figure \ref{fig:gage}).

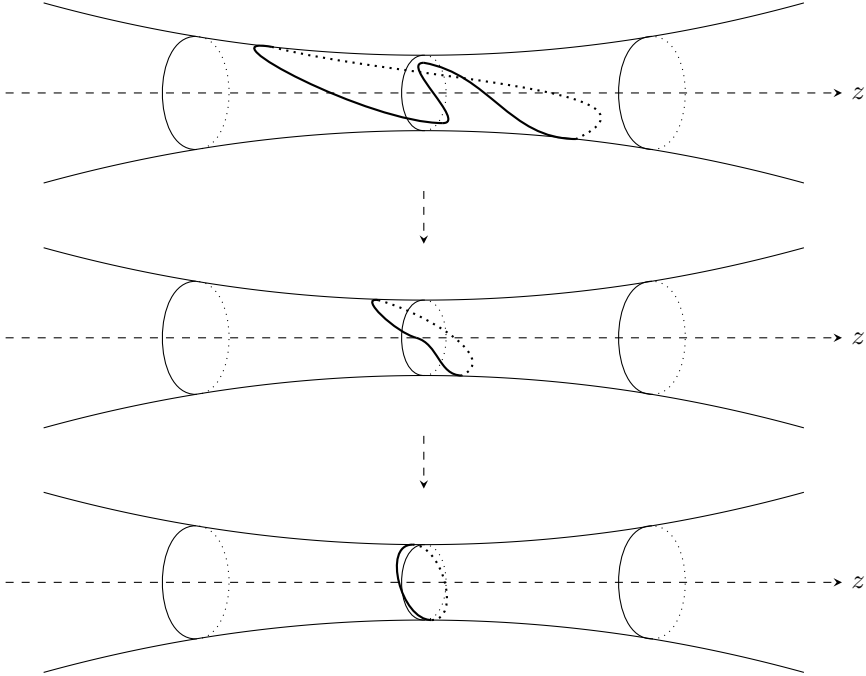
\begin{figure}
\centering
\begin{tikzpicture}[domain=-5:5,samples=100,>=stealth]
\draw[->,dashed] (-5.5,0) -- (5.5,0) node[right] {$z$};

\draw plot (\x, {pow(\x,2)/36 + 0.5});
\draw plot (\x, {-pow(\x,2)/36 - 0.5});

\draw (-3,-0.75) to [out=180,in=180] (-3,0.75);
\draw[dotted] (-3,-0.75) to [out=0,in=0] (-3,0.75);

\draw (3,-0.75) to [out=180,in=180] (3,0.75);
\draw[dotted] (3,-0.75) to [out=0,in=0] (3,0.75);

\draw (0,-0.5) to [out=180,in=180] (0,0.5);
\draw[dotted] (0,-0.5) to [out=0,in=0] (0,0.5);

\draw[thick] (-2,1/9 +0.5) to [out=170,in=180] (0.25,-0.4);
\draw[thick] (0.25,-0.4) to [out=0,in=180] (0,0.4);
\draw[thick] (0,0.4) to [out=-10,in=180] (2,-1/9 -0.5);
\draw[dotted,thick] (-2,1/9 +0.5) to [out=-15,in=25] (2,-1/9 -0.5);

\draw[->, dashed] (0,-1.3) to (0,-2);

\end{tikzpicture}
\begin{tikzpicture}[domain=-5:5,samples=100,>=stealth]
\draw[->,dashed] (-5.5,0) -- (5.5,0) node[right] {$z$};

\draw plot (\x, {pow(\x,2)/36 + 0.5});
\draw plot (\x, {-pow(\x,2)/36 - 0.5});

\draw (-3,-0.75) to [out=180,in=180] (-3,0.75);
\draw[dotted] (-3,-0.75) to [out=0,in=0] (-3,0.75);

\draw (3,-0.75) to [out=180,in=180] (3,0.75);
\draw[dotted] (3,-0.75) to [out=0,in=0] (3,0.75);

\draw (0,-0.5) to [out=180,in=180] (0,0.5);
\draw[dotted] (0,-0.5) to [out=0,in=0] (0,0.5);

\draw[thick] (-0.6,0.5) to [out=170,in=165] (-0.1,0);
\draw[thick] (-0.1,0) to [out=-15,in=180] (0.5,-0.5);
\draw[dotted,thick] (-0.6,0.5) to [out=-15,in=25] (0.5,-0.5);

\draw[->, dashed] (0,-1.3) to (0,-2);
\end{tikzpicture}
\begin{tikzpicture}[domain=-5:5,samples=100,>=stealth]
\draw[->,dashed] (-5.5,0) -- (5.5,0) node[right] {$z$};

\draw plot (\x, {pow(\x,2)/36 + 0.5});
\draw plot (\x, {-pow(\x,2)/36 - 0.5});

\draw (-3,-0.75) to [out=180,in=180] (-3,0.75);
\draw[dotted] (-3,-0.75) to [out=0,in=0] (-3,0.75);

\draw (3,-0.75) to [out=180,in=180] (3,0.75);
\draw[dotted] (3,-0.75) to [out=0,in=0] (3,0.75);

\draw (0,-0.5) to [out=180,in=180] (0,0.5);
\draw[dotted] (0,-0.5) to [out=0,in=0] (0,0.5);

\draw[thick] (-0.15,0.5) to [out=180,in=180] (0.1,-0.5); 
\draw[dotted,thick] (-0.15,0.5) to [out=0,in=0] (0.1,-0.5); 

\end{tikzpicture}
\caption{Curve shortening flow on the surface that is convex at infinity}
\label{fig:gage}
\end{figure}

\end{remark}

\section*{Acknowledgement} 
I would like to express my gratitude to my supervisor, Naoyuki Koike, for his invaluable advice and consistent support throughout my research.


\vspace{0.5truecm}

\begin{flushright}
{\small 
Naotoshi Fujihara \\
Department of Mathematics \\
Graduate School of Science \\
Tokyo University of Science \\
1-3 Kagurazaka \\
Shinjuku-ku \\
Tokyo 162-8601 \\
Japan \\
(E-mail: 1123706@ed.tus.ac.jp)
}
\end{flushright}

\end{document}